\newcommand{\showcomments}{yes}
\renewcommand{\showcomments}{no}
\newsavebox{\commentbox}
\newtheorem{thm}{Theorem}[section]
\newtheorem{lem}[thm]{Lemma}
\newtheorem{prop}[thm]{Proposition}
\newtheorem{thmi}{Theorem}
\newtheorem{cori}[thmi]{Corollary}
\theoremstyle{definition}
\newtheorem{defn}[thm]{Definition}
\newtheorem{remi}[thmi]{Remark}
\newtheorem{rem}[thm]{Remark}
\newtheorem{claim*}{Claim}
\DeclareMathOperator{\dimension}{dim}
\DeclareMathOperator{\image}{im}
\DeclareMathOperator{\link}{Lk}
\DeclareMathOperator{\cstar}{St}
\newcommand{\neb}{\mathcal N}
\newcommand{\field}[1]{\mathbb{#1}}
\newcommand{\integers}{\ensuremath{\field{Z}}}
\newcommand{\naturals}{\ensuremath{\field{N}}}
\newcommand{\reals}{\ensuremath{\field{R}}}
\newcommand{\closure}[1]{Cl\left({#1}\right)}
\newcommand{\floor}[1]{\ensuremath{\lfloor\widehat{#1}\rfloor}}
\newcommand{\Rmnum}[1]{\mathbf{{\expandafter\@slowromancap\romannumeral #1@}}}
\let\oldmarginpar\marginpar
\renewcommand\marginpar[1]{\-\oldmarginpar[\raggedleft\footnotesize #1]%
{\raggedright\footnotesize #1}}
\newcounter{enumitemp}
\newcommand{\dist}{\textup{\textsf{d}}}
\newcommand{\sepgrowth}{\mathbf{Sep}}
\newcommand{\gate}[2]{\mathfrak g_{_{#2}}({#1})}
\newcommand{\growth}{\mathrm g\mathrm r}
\begin{document}
\title{Quantifying separability in virtually special groups}
\author[M.F. Hagen]{Mark F. Hagen}
\address{Dept. of Pure Maths and Math. Stat., University of Cambridge, Cambridge, UK}
\email{markfhagen@gmail.com}
\author[P. Patel]{Priyam Patel}
\address{Department of Mathematics, Purdue University, West Lafayette, Indiana, USA}
\email{patel376@purdue.edu}
\date{\today}

\begin{abstract}
We give a new, effective proof of the separability of cubically convex-cocompact subgroups of special groups.  As a consequence, we show that if $G$ is a virtually compact special hyperbolic group, and $Q\leq G$ is a $K$-quasiconvex subgroup, then any $g\in G-Q$ of word-length at most $n$ is separated from $Q$ by a subgroup whose index is polynomial in $n$ and exponential in $K$.  This generalizes a result of Bou-Rabee and the authors on residual finiteness growth \cite{BouRabeeHagenPatel:res_finite_special} and a result of the second author on surface groups \cite{Patel:quantifying}. 
\end{abstract}

\subjclass[2010]{20E26, 20F36}
%\keywords{special cube complex, virtually special group, subgroup separability, separability growth}
\maketitle
\setcounter{tocdepth}{2}

%\tableofcontents 
\section*{Introduction}
Early motivation for studying residual finiteness and subgroup separability was a result of the relevance of these properties to decision problems in group theory.  An observation of Dyson~\cite{Dyson} and Mostowski~\cite{Mostowski}, related to earlier ideas of McKinsey~\cite{McKinsey}, states that finitely presented residually finite groups have solvable word problem. The word problem is a special case of the membership problem, i.e. the problem of determining whether a given $g\in G$ belongs to a particular subgroup $H$ of $G$. \emph{Separability} can produce a solution to the membership problem in essentially the same way that a solution to the word problem is provided by residual finiteness (see, e.g., the discussion in~\cite{AschenbrennerFriedlWilton:decision}). A subgroup $H\leq G$ is \emph{separable} in $G$ if, for all $g\in G-H$, there exists $G'\leq_{f.i.}G$ with $H\leq G'$ and $g\not\in G'$.  Producing an upper bound, in terms of $g$ and $H$, on the minimal index of such a subgroup $G'$ is what we mean by \emph{quantifying separability} of $H$ in $G$.  Quantifying separability is related to the membership problem; see Remark~\ref{remi:membership} below.

Recently, separability has played a crucial role in low-dimensional topology, namely in the resolutions of the Virtually Haken and Virtually Fibered conjectures~\cite{Agol:virtual_haken,Wise:QCH}. Its influence in topology is a consequence of the seminal paper of Scott \cite{Scott:LERF}, which establishes a topological reformulation of subgroup separability.  Roughly, Scott's criterion allows one to use separability to promote (appropriately construed) immersions to embeddings in finite covers. In \cite{Agol:virtual_haken}, Agol proved the Virtually Special Conjecture of Wise, an outstanding component of the proofs of the above conjectures. Agol's theorem shows that every word hyperbolic cubical group virtually embeds in a right-angled Artin group (hereafter, \emph{RAAG}). Cubically convex-cocompact subgroups of RAAGs are separable~\cite{HsuWise:separating,Haglund:graph_products} and Agol's theorem demonstrates that word hyperbolic cubical groups inherit this property via the virtual embeddings (separability properties are preserved under passing to subgroups and finite index supergroups). In fact, since quasiconvex subgroups of hyperbolic cubical groups are cubically convex-cocompact~\cite{Haglund:graph_products,SageevWise:cores}, all quasiconvex subgroups of such groups are separable. In this paper, we give a new, effective proof of the separability of cubically convex-cocompact subgroups of special groups.  
Our main technical result is:

\begin{thmi}\label{thmi:main}
Let $\Gamma$ be a simplicial graph and let $Z$ be a compact connected cube complex, based at a $0$--cube $x$, with a based local isometry $Z\rightarrow S_\Gamma$.  For all $g\in A_\Gamma-\pi_1Z$, there is a cube complex $(Y,x)$ such that: 
\begin{enumerate}
 \item $Z\subset Y$;
 \item there is a based local isometry $Y\rightarrow S_\Gamma$ such that $Z\rightarrow S_\Gamma$ factors as $Z\hookrightarrow Y\rightarrow S_\Gamma$;
 \item any closed based path representing $g$ lifts to a non-closed path at $x$ in $Y$;
 \item $|Y^{(0)}|\leq|Z^{(0)}|(|g|+1)$, 
\end{enumerate}
\noindent where $|g|$ is the word length of $g$ with respect to the standard generators. 
\end{thmi}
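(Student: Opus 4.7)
The plan is to build $Y$ by iteratively attempting to lift a fixed geodesic word for $g$ inside cube complexes $Y_i$ that grow from $Z$ by attaching one copy of $Z$ per letter that cannot be lifted. Fix a geodesic word $w = s_1 s_2 \cdots s_n$ representing $g$, with $n = |g|$ and each $s_i$ a standard generator of $A_\Gamma$ or its inverse. I would produce a nested tower
\[
  (Z, x) \;=\; (Y_0, y_0) \;\subseteq\; (Y_1, y_1) \;\subseteq\; \cdots \;\subseteq\; (Y_n, y_n) \;=\; (Y, y_n)
\]
of based cube complexes with based local isometries $Y_i \to S_\Gamma$ extending the given one on $Z$, such that $s_1 \cdots s_i$ lifts at $x$ to the edge-path $y_0 y_1 \cdots y_i$ in $Y_i$ and $|Y_i^{(0)}| \leq (i+1)|Z^{(0)}|$.

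The inductive step from $(Y_i, y_i)$ to $(Y_{i+1}, y_{i+1})$ branches on whether an edge labeled $s := s_{i+1}$ already exists at $y_i$ in $Y_i$. If so, take $y_{i+1}$ to be its other endpoint and $Y_{i+1} := Y_i$; no new $0$-cubes are introduced. Otherwise, attach a fresh copy $Z_{i+1}$ of $Z$ to $Y_i$ as an ``$s$-slab'': let $L \subseteq Y_i$ be the subcomplex generated by cubes containing $y_i$ whose edge-labels all commute with $s$ in $A_\Gamma$, and let $L' \subseteq Z_{i+1}$ be the corresponding subcomplex at the ancestor $v' \in Z_{i+1}$ of $y_i$ (the copy in $Z_{i+1}$ of the base vertex of $Z$ from which $y_i$ descends). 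Glue in the product $L \times [0,1]$---with $[0,1]$-direction labeled $s$---identifying $L \times \{0\}$ with $L \subseteq Y_i$ and $L \times \{1\}$ with $L' \subseteq Z_{i+1}$ via the isomorphism $L \cong L'$. Set $y_{i+1}$ to be the image of $(y_i, 1)$, a vertex of $Z_{i+1}$. At most $|Z^{(0)}|$ new $0$-cubes are added (those of $Z_{i+1}$), preserving the vertex bound.

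The two key technical obstacles are (a) verifying that the slab attachment preserves the based local isometry to $S_\Gamma$, and (b) proving non-closedness of the lift. For (a), one checks that the link of every $0$-cube of $Y_{i+1}$ remains a full subcomplex of the link of the vertex of $S_\Gamma$---that is, of the ``doubled'' flag complex of $\Gamma$. Links at vertices away from the slab are unchanged; links at $Z_{i+1}$-vertices are inherited from the local isometry $Z \to S_\Gamma$; and at a slab vertex the link gains exactly one $s^\pm$-vertex joined precisely to its commuting-with-$s$ neighbors, still a full subcomplex. The existence of $L'$ at the ancestor $v'$ isomorphic to $L$ is the delicate point, requiring an auxiliary induction on the tower to control how previous slab attachments have enlarged links. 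For (b), if no slab was ever attached then $w$ lifts entirely into $Z$, and this lift cannot be closed lest $g \in \pi_1 Z$. Otherwise $y_n$ lies in some $Z_j$ with $j \geq 1$: any return of the lift to an earlier stage would demand reading $s_j^{-1}$ after only letters commuting with $s_j$, contradicting the geodesicity of $w$. Since $Y \to S_\Gamma$ is a local isometry, non-closedness for this geodesic representative transfers to every other closed-path representative of $g$. The hardest part will be (a): the cumulative link analysis across the tower, and in particular the ancestor-matching argument ensuring $L' \cong L$ at every stage.
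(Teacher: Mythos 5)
Your approach is genuinely different from the paper's: you build $Y$ bottom-up by attaching ``$s$-slabs'' letter-by-letter along a geodesic for $g$, whereas the paper works top-down in $\widetilde S_\Gamma$, taking the convex hull $K$ of $\widehat Z\cup\{g\tilde x\}$ and forming a quotient of $K$ by collapsing its hyperplane frames. However, the slab-gluing step fails, and this is not a technicality that a more careful ``auxiliary induction'' can repair. The claimed isomorphism $L\cong L'$---between the commuting-with-$s$ subcomplex $L\subseteq Y_i$ at $y_i$ and a subcomplex $L'$ of the fresh copy $Z_{i+1}$---is simply false in general, because $L$ accumulates structure from earlier slab attachments that has no counterpart in $Z$. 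Take $\Gamma$ a $4$-cycle on vertices $a,b,c,d$ (so $A_\Gamma\cong F_2\times F_2$), $Z$ a single $0$-cube, and $g=ab$. After step $1$, $Y_1$ is a single $a$-edge; at step $2$, since $a$ commutes with $b$, the subcomplex $L$ at $y_1$ is that whole $a$-edge (two $0$-cubes), but $Z_2$ is a single $0$-cube, so no $L'\cong L$ exists. Shrinking $L$ to $\{y_1\}$ leaves a missing $a\times b$-corner at $y_1$, breaking the local isometry to $S_\Gamma$; retaining $L$ and creating fresh $0$-cubes for $L\times\{1\}$ yields $|Y_2^{(0)}|=4>3=|Z^{(0)}|(|g|+1)$, violating the bound. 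Since in general $|L^{(0)}|$ can be comparable to $|Y_i^{(0)}|$, that second fix only yields an exponential, not linear, vertex count.

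The paper's construction sidesteps this by never embedding a fresh copy of $Z$: it maps each $\mathfrak H$-frame $(H\cap K)\times L$ to $\overline{H\cap K}\times L$, where $\overline{H\cap K}$ is the image of $H\cap K$ in the immersed hyperplane $\overline H$ in $S_\Gamma$ (or in a canonical completion over $S_\Gamma$ for the ``interfered'' frames). Passing to $\overline{H\cap K}$ wraps the slab around, so its far side lands on $0$-cubes that already exist; this is exactly what keeps the count at $|Z^{(0)}|$ per unit of slab length. In the example above the paper's recipe produces a cylinder $S^1_b\times[0,1]_a$ with just two $0$-cubes. Without an analogous wrapping mechanism, letter-by-letter attachment of copies of $Z$ cannot achieve the linear bound. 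As a secondary point, the non-closedness argument in (b) also needs more care: a lift could return toward the base by looping inside some $Z_j$ (whose fundamental group is $\pi_1Z$, not trivial) rather than by retracing a slab edge, so geodesicity of $w$ does not by itself give the contradiction you invoke.
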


\noindent Via Haglund-Wise's canonical completion \cite{HaglundWise:special}, Theorem~\ref{thmi:main} provides the following bounds on the separability growth function (defined in Section \ref{sec:background}) of the class of cubically convex-cocompact subgroups of a (virtually) special group.  Roughly, separability growth quantifies separability of all subgroups in a given class.

\begin{cori}\label{cor:qcerf_special}\label{cor:qcerf_special}
Let $G\cong\pi_1X$, with $X$ a compact special cube complex, and let $\mathcal Q_R$ be the class of subgroups represented by compact local isometries to $X$ whose domains have at most $R$ vertices. Then $$\sepgrowth^{\mathcal Q_R}_{G,\mathcal S}(Q,n)\leq PRn$$ for all $Q\in\mathcal Q_R$ and $n\in\naturals$, where the constant $P$ depends only on the generating set $\mathcal S$.  Hence, letting $\mathcal Q'_K$ be the class of subgroups $Q\leq G$ such that the convex hull of $Q\tilde x$ lies in $\neb_{K}(Q\tilde x)$ and $\tilde x\in\widetilde X^{(0)}$, $$\sepgrowth^{\mathcal Q'_K}_{G,\mathcal S}(Q,n)\leq P'\growth_{\widetilde X}(K)n,$$ where $P'$ depends only on $G,\widetilde X,\mathcal S$, and $\growth_{\widetilde X}$ is the growth function of $\widetilde X^{(0)}$.
\end{cori}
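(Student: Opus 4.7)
The plan is to feed the cube complex $Y$ supplied by Theorem~\ref{thmi:main} into the canonical completion construction of Haglund--Wise, turning the based local isometry $Y\to S_\Gamma$ into a finite based cover in which $Y$ embeds; this cover will provide the desired separating finite-index subgroup, whose index is controlled by $|Y^{(0)}|$.

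Begin by reducing to the RAAG case. Since $X$ is special, fix a based local isometry $X\to S_\Gamma$, inducing $G\hookrightarrow A_\Gamma$. For $Q\in\mathcal Q_R$ represented by a compact local isometry $Z\to X$ with $|Z^{(0)}|\leq R$, compose to obtain a based local isometry $Z\to S_\Gamma$ representing $Q$ as a subgroup of $A_\Gamma$. Given $g\in G-Q$ with $|g|_{\mathcal S}=n$, its word length in the standard generators of $A_\Gamma$ is at most $cn$ for a constant $c$ depending only on $\mathcal S$ (and the fixed $X$). Apply Theorem~\ref{thmi:main} to the local isometry $Z\to S_\Gamma$ and to $g\in A_\Gamma-\pi_1 Z$, producing $(Y,x)$ satisfying the four conclusions, with $|Y^{(0)}|\leq R(cn+1)$.

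Now apply canonical completion \cite{HaglundWise:special} to $Y\to S_\Gamma$ to obtain a finite based cover $\widehat{S_\Gamma}\to S_\Gamma$ containing $Y$ as a based subcomplex, of degree at most $D\,|Y^{(0)}|$ for a constant $D$ depending only on $\Gamma$. The key compatibility property is that, at any $0$--cube $y\in Y$, an edge of $\widehat{S_\Gamma}$ at $y$ labelled by a generator already used by an edge of $Y$ at $y$ must coincide with that edge of $Y$ (combining the local-isometry condition on $Y\to S_\Gamma$ with the covering-space condition on $\widehat{S_\Gamma}\to S_\Gamma$). Hence the unique lift at $x$ of a closed based $g$--loop in $S_\Gamma$ to $\widehat{S_\Gamma}$ agrees step-by-step with the lift to $Y$ furnished by Theorem~\ref{thmi:main}(3), which is non-closed. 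Thus $g\notin\pi_1(\widehat{S_\Gamma},x)$, so $\widehat G:=G\cap\pi_1(\widehat{S_\Gamma},x)$ is a finite-index subgroup of $G$ containing $Q$ and missing $g$, satisfying
\[
[G:\widehat G]\leq[A_\Gamma:\pi_1\widehat{S_\Gamma}]=\deg(\widehat{S_\Gamma}\to S_\Gamma)\leq D\,|Y^{(0)}|\leq DR(cn+1).
\]
Collecting constants yields the first bound with $P=P(\mathcal S)$.

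For the second statement, if $Q\in\mathcal Q'_K$ then $Q$ is represented by the compact local isometry $\hull(Q\tilde x)/Q\to X$, whose $0$--skeleton has cardinality at most the number of $Q$--orbits in $\neb_K(Q\tilde x)\cap\widetilde X^{(0)}$ and hence at most $\growth_{\widetilde X}(K)$. Applying the first bound with $R=\growth_{\widetilde X}(K)$ yields the inequality. The main technical point is pinning down the precise degree bound for canonical completion as a linear function of $|Y^{(0)}|$ and verifying that a path beginning at $x$ and staying in $Y$ by Theorem~\ref{thmi:main}(3) does not accidentally exit $Y$ inside $\widehat{S_\Gamma}$; both are established features of canonical completion from \cite{HaglundWise:special}, but require careful bookkeeping to extract.
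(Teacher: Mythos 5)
Your proof is correct and takes essentially the same approach as the paper: apply Theorem~\ref{thmi:main} to get $Y$, run Haglund--Wise canonical completion on $Y\to S_\Gamma$, and intersect $\pi_1\widehat S_\Gamma$ with $G$. The only minor difference is that you bound $\deg(\widehat S_\Gamma\to S_\Gamma)$ by $D\,|Y^{(0)}|$ for a $\Gamma$-dependent constant $D$, whereas \cite[Lemma~2.8]{BouRabeeHagenPatel:res_finite_special} actually gives the sharper equality $\deg(\widehat S_\Gamma\to S_\Gamma)=|Y^{(0)}|$ (so $D=1$); this does not affect the asymptotic conclusion since the slack is absorbed into $P$.
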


\noindent In the hyperbolic case, where cubically convex-cocompactness is equivalent to quasiconvexity, we obtain a bound that is polynomial in the length of the word and exponential in the quasiconvexity constant:

\begin{cori}\label{cori:hyperbolic_sep}\label{cor:hyperbolic_1}
Let $G$ be a group with an index-$J$ special subgroup.  Fixing a word-length $\|-\|_{\mathcal S}$ on $G$, suppose that $(G,\|-\|_{\mathcal S})$ is $\delta$-hyperbolic.  For each $K\geq1$, let $\mathcal Q_K$ be the set of subgroups $Q\leq G$ such that $Q$ is $K$-quasiconvex with respect to $\|-\|_{\mathcal S}$.  Then there exists a constant $P=P(G,\mathcal S)$ such that for all $K\geq0,Q\in\mathcal Q_K$, and $n\geq 0$, $$\sepgrowth^{\mathcal Q_K}_{G,\mathcal S}(Q,n)\leq P\growth_{G}(PK)^{J!}n^{J!},$$ where $\growth_G$ is the growth function of $G$. 
\end{cori}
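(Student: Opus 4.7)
The strategy is to lift the compact-special bound of Corollary~\ref{cor:qcerf_special} to the virtually special case by working inside a \emph{normal} special subgroup of $G$; the factor $J!$ appears via two unavoidable passes to normal cores (one for the ambient subgroup, one for the conjugates of the separating subgroup).

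Denote the index-$J$ special subgroup by $G_0\leq G$ and replace it by its normal core $N:=\bigcap_{\gamma\in G}\gamma G_0\gamma^{-1}$, which is a normal special subgroup of $G$ with $[G:N]\leq J!$. Write $N=\pi_1X$ for a compact special cube complex $X$ and let $\mathcal{S}_N$ be its standard generating set; the inclusion $(N,\mathcal{S}_N)\hookrightarrow(G,\mathcal{S})$ is a quasi-isometry with distortion depending only on $G,\mathcal{S}$. For $Q\in\mathcal{Q}_K$, set $Q_N:=Q\cap N$; since $Q/Q_N$ embeds in $G/N$ we have $[Q:Q_N]\leq J!$, and standard arguments (intersections of quasiconvex subgroups, and quasiconvexity under finite-index inclusion) give that $Q_N$ is $K'$-quasiconvex in $(N,\mathcal{S}_N)$ with $K'\leq C(K+1)$. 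By the hyperbolicity of $G$, the convex hull in $\widetilde{X}$ of $Q_N\tilde x$ then lies in a $K''$-neighborhood of $Q_N\tilde x$ with $K''\leq C'(K+1)$, placing $Q_N\in\mathcal{Q}'_{K''}$ in the sense of Corollary~\ref{cor:qcerf_special}.

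Given $g\in G-Q$ with $\|g\|_{\mathcal{S}}\leq n$, split into two cases. If $g\notin QN$, then by normality $QN\leq G$ is a subgroup containing $Q$ but not $g$ with $[G:QN]\leq J!$, and we are done. If $g\in QN$, write $g=qn_0$ with $q\in Q$, $n_0\in N$, chosen to minimize $\|n_0\|_{\mathcal{S}_N}$; as discussed in the last paragraph this can be done with $\|n_0\|_{\mathcal{S}_N}\leq C_3(n+1)$. Since $g\notin Q$, $n_0\notin Q_N$, and Corollary~\ref{cor:qcerf_special} furnishes $N'\leq_{f.i.}N$ with $Q_N\leq N'$, $n_0\notin N'$, and
\[
[N:N']\;\leq\;P_0\,\growth_{\widetilde{X}}(K'')\,\|n_0\|_{\mathcal{S}_N}\;\leq\;P_1\,\growth_G(P_1K)\,n.
\]
To promote $N'$ to a separating subgroup of $G$ containing $Q$, set $N'':=\bigcap_{qQ_N\in Q/Q_N}qN'q^{-1}$, which (since $Q_N\subseteq N'$) is a well-defined $Q$-invariant subgroup of $N$ containing $Q_N$, missing $n_0$, with $[N:N'']\leq[N:N']^{[Q:Q_N]}\leq[N:N']^{J!}$. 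Then $G':=QN''$ is a subgroup of $G$ containing $Q$; using $Q\cap N''=Q_N$ and $n_0\notin N''$, a direct chase shows $g\notin G'$. The index bound
\[
[G:G']\;=\;\frac{[G:N'']}{[Q:Q_N]}\;\leq\;J!\cdot[N:N']^{J!}\;\leq\;P\,\growth_G(PK)^{J!}\,n^{J!}
\]
then completes the proof, with $P=P(G,\mathcal{S})$ absorbing the constants $J!,P_1,C,C'$ and the quasi-isometry distortion of $(N,\mathcal{S}_N)\hookrightarrow(G,\mathcal{S})$.

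The main obstacle is the length bound in the case $g\in QN$: one must select $q\in Q\cap gN$ with $\|q\|_{\mathcal{S}}\lesssim\|g\|_{\mathcal{S}}$, so that $\|n_0\|_{\mathcal{S}}\lesssim\|g\|_{\mathcal{S}}$ (whence also $\|n_0\|_{\mathcal{S}_N}\lesssim\|g\|_{\mathcal{S}}$ by the quasi-isometry). The coset $gN$ is not uniformly quasiconvex, so the nearest-point projection of $g$ to $Q$ in the $\delta$-hyperbolic group $G$ need not lie in $gN$; the fix is to combine this projection with a correction by short $N$-coset representatives in $G$, whose lengths are bounded by a constant depending only on $G,N,\mathcal{S}$. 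The remaining steps --- quasiconvexity transfer from $G$ to $N$, the normal-core bookkeeping, and the index computation --- are technical but routine.
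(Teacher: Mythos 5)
Your architecture matches the paper's proof almost step for step: pass to the normal core $N$ (the paper calls it $G'$), transfer quasiconvexity to $Q\cap N$ via Lemma~\ref{lem:quasiconvexity_intersection}, split on whether $g\in QN$ (dispatching the easy case with $[G:QN]\leq J!$), apply the special-case bound of Corollary~\ref{cor:qcerf_special} inside $N$, intersect the $[Q:Q\cap N]\leq J!$ conjugates of the resulting subgroup, and take the product with $Q$; the index bookkeeping is identical. You also correctly identify the one genuinely delicate step, namely controlling $\|n_0\|$ after writing $g=qn_0$ with $q\in Q$, $n_0\in N$.

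However, the fix you propose for that step does not work. You suggest taking the nearest-point projection of $g$ to $Q$ and then ``correcting by short $N$-coset representatives in $G$.'' The problem is that you need $q$ to lie simultaneously in $Q$ \emph{and} in $gN$ (i.e.\ $q$ must be a representative of $Q_N=Q\cap N$ inside $Q$ hitting the right coset), and neither the projection to $Q$ nor a short representative of $gN$ in $G$ delivers this: the projection of $g$ to $Q$ need not lie in $gN$, and a short element of $gN$ need not lie in $Q$. The paper's actual argument (Lemma~\ref{lem:short_transversal}) is different in a small but essential way: it produces a short \emph{left transversal of $Q_N$ in $Q$}, not of $N$ in $G$, so that the chosen $q=q_i$ automatically lies in $Q$ and satisfies $g\in q_iN$. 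Applied to the generating set of $Q$ furnished by $K$-quasiconvexity, this gives $\|q_i\|_{\mathcal S}\lesssim (2K+1)J!$, hence $\|n_0\|_{\mathcal S}\leq\|g\|_{\mathcal S}+O(K)$ rather than your claimed $C_3(n+1)$ with no $K$-dependence; the extra $K$-term is harmless because it is absorbed into $\growth_G(PK)$ in the final bound. So the theorem survives, but your sketch of the length estimate needs to be replaced by the transversal-of-$Q_N$-in-$Q$ argument.
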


Corollary~\ref{cori:hyperbolic_sep} says that if $G$ is a hyperbolic cubical group, $Q\leq G$ is $K$-quasiconvex, and $g\in G-Q$, then $g$ is separated from $Q$ by a subgroup of index bounded by a function polynomial in $\|g\|_{\mathcal S}$ and exponential in $K$. 

The above results fit into a larger body of work dedicated to quantifying residual finiteness and subgroup separability of various classes of groups (see, e.g., ~\cite{BK12,BM13,KassabovMatucci:bounding,Buskin:efficient,Patel:quantifying,Patel:thesis,Rivin:geodesics,BouRabee:approx,BouRabeeMcReynolds:char,KozmaThom}). When $G$ is the fundamental group of a hyperbolic surface, compare Corollary~\ref{cori:hyperbolic_sep} to~\cite[Theorem 7.1]{Patel:quantifying}.  Combining various cubulation results with \cite{Agol:virtual_haken}, the groups covered by Corollary~\ref{cori:hyperbolic_sep} include fundamental groups of hyperbolic $3$--manifolds~\cite{BergeronWise,KM09}, hyperbolic Coxeter groups~\cite{HaglundWiseCoxeter}, simple-type arithmetic hyperbolic lattices~\cite{BergeronHaglundWiseSimple}, hyperbolic free-by-cyclic groups~\cite{HagenWise:general}, hyperbolic ascending HNN extensions of free groups with irreducible monodromy~\cite{HagenWise:irreducible}, hyperbolic groups with a quasiconvex hierarchy \cite{Wise:QCH}, $C'(\frac{1}{6})$ small cancellation groups \cite{Wise:small_cancel_cube}, and hence random groups at low enough density \cite{OllivierWise:random}, among many others.

In \cite{BouRabeeHagenPatel:res_finite_special} Bou-Rabee and the authors quantified residual finiteness for virtually special groups, by working in RAAGs and appealing to the fact that upper bounds on residual finiteness growth are inherited by finitely-generated subgroups and finite-index supergroups. Theorem~\ref{thmi:main} generalizes a main theorem of \cite{BouRabeeHagenPatel:res_finite_special}, and accordingly the proof is reminiscent of the one in \cite{BouRabeeHagenPatel:res_finite_special}. However, residual finiteness is equivalent to separability of the trivial subgroup, and thus it is not surprising that quantifying separability for an arbitrary convex-cocompact subgroup of a RAAG entails engagement with a more complex geometric situation. Our techniques thus significantly generalize those of \cite{BouRabeeHagenPatel:res_finite_special}.

\begin{remi}[Membership problem]\label{remi:membership}
If $H$ is a finitely-generated separable subgroup of the finitely-presented group $G$, and one has an upper bound on $\sepgrowth^{\{H\}}_{G,\mathcal S}(|g|)$, for some finite generating set $\mathcal S$ of $G$, then the following procedure decides if $g\in H$: first, enumerate all subgroups of $G$ of index at most $\sepgrowth^{\{H\}}_{G,\mathcal S}(|g|)$ using a finite presentation of $G$.  Second, for each such subgroup, test whether it contains $g$; if so, ignore it, and if not, proceed to the third step.  Third, for each finite-index subgroup not containing $g$, test whether it contains each of the finitely many generators of $H$; if so, we have produced a finite-index subgroup containing $H$ but not $g$, whence $g\not\in H$.  If we exhaust the subgroups of index at most $\sepgrowth^{\{H\}}_{G,\mathcal S}(|g|)$ without finding such a subgroup, then $g\in H$.  In particular, Corollary~\ref{cori:hyperbolic_sep} gives an effective solution to the membership problem for quasiconvex subgroups of hyperbolic cubical groups, though it does not appear to be any more efficient than the more general solution to the membership problem for quasiconvex subgroups of (arbitrary) hyperbolic groups recently given by Kharlampovich--Miasnikov--Weil in~\cite{KharlampovichMiasnikovWeil:membership}.
\end{remi}

The paper is organized as follows: In Section \ref{sec:background}, we define the separability growth of a group with respect to a class $\mathcal Q$ of subgroups, which generalizes the residual finiteness growth introduced in~\cite{BouRabee:quantifying}. We also provide some necessary background on RAAGs and cubical geometry. In Section \ref{sec:consequences}, we discuss corollaries to the main technical result, including Corollary~\ref{cori:hyperbolic_sep}, before concluding with a proof of Theorem~\ref{thmi:main} in Section~\ref{sec:main_proof}.

\subsection*{Acknowledgments}
We thank K. Bou-Rabee, S. Dowdall, F. Haglund, D.B. McReynolds, N. Miller, H. Wilton, and D.T. Wise for helpful discussions about issues related to this paper.  We also thank an anonymous referee for astute corrections. The authors acknowledge travel support from U.S. National Science Foundation grants DMS 1107452, 1107263, 1107367 "RNMS: Geometric Structures and Representation Varieties" (the GEAR Network) and from grant NSF 1045119.  M.F.H. was supported by the National Science Foundation under Grant Number NSF 1045119.

\section{Background}\label{sec:background}

\subsection{Separability growth}\label{sec:quant_res_prop}
Let $G$ be a group generated by a finite set $\mathcal S$ and let $H\leq G$ be a subgroup.  Let $\Omega_H=\{\Delta\leq G:H\leq\Delta\}$, and define a map $D_G^{\Omega_H}:G-H\rightarrow\naturals\cup\{\infty\}$ by $$D_G^{\Omega_H}(g)=\min\{[G:\Delta]:\Delta\in\Omega_H,g\not\in\Delta\}.$$  This is a special case of the notion of a \emph{divisibility function} defined in~\cite{BouRabee:quantifying} and discussed in~\cite{BM13}.  Note that $H$ is a separable subgroup of $G$ if and only if $D_G^{\Omega_H}$ takes only finite values.

The \emph{separability growth} of $G$ with respect to a class $\mathcal Q$ of subgroups is a function $\sepgrowth_{G,\mathcal S}^{\mathcal Q}:\mathcal Q\times\naturals\rightarrow\naturals\cup\{\infty\}$ given by $$\sepgrowth_{G,\mathcal S}^{\mathcal Q}(Q,n)=\max\left\{D_G^{\Omega_Q}(g):g\in G-Q,\|g\|_\mathcal S\leq n\right\}.$$  If $\mathcal Q$ is a class of separable subgroups of $G$, then the separability growth measures the index of the subgroup to which one must pass in order to separate $Q$ from an element of $G-Q$ of length at most $n$. For example, when $G$ is residually finite and $\mathcal Q=\{\{1\}\}$, then $\sepgrowth^{\mathcal Q}_{G,\mathcal S}$ is the residual finiteness growth function.  The following fact is explained in greater generality in~\cite[Section~2]{BouRabeeHagenPatel:res_finite_special}. (In the notation of \cite{BouRabeeHagenPatel:res_finite_special}, $\sepgrowth^{\mathcal Q}_{G,\mathcal S}(Q,n)=\mathrm{RF}^{\Omega_Q}_{G,\mathcal S}(n)$ for all $Q\in\mathcal Q$ and $n\in\naturals$.)

\begin{prop}\label{prop:growth}
Let $G$ be a finitely generated group and let $\mathcal Q$ be a class of subgroups of $G$.  If $\mathcal S, \mathcal S'$ are finite generating sets of $G$, then there exists a constant $C>0$ with $$\sepgrowth_{G,\mathcal S'}^{\mathcal Q}(Q,n)\leq C\cdot\sepgrowth_{G,\mathcal S}^{\mathcal Q}(Q,Cn)$$ for $Q\in\mathcal Q,n\in\naturals$.  Hence the asymptotic growth rate of \,$\sepgrowth^{\mathcal Q}_{G,\mathcal S}$ is independent of $\mathcal S$.
\end{prop}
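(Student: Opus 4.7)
The plan is to use the standard bi-Lipschitz equivalence between word metrics on $G$ associated to different finite generating sets, and then translate this into a comparison of the separability growth functions.

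First I would choose the constant. Since $\mathcal S$ and $\mathcal S'$ are both finite generating sets, each element of $\mathcal S$ may be written as a word in $\mathcal S'$ and vice versa. I would set
\[
C=\max\Bigl(\{\|s\|_{\mathcal S'}:s\in\mathcal S\}\cup\{\|s'\|_{\mathcal S}:s'\in\mathcal S'\}\Bigr),
\]
which is finite by finiteness of $\mathcal S$ and $\mathcal S'$. A straightforward substitution argument (writing a geodesic word for $g$ in one alphabet and rewriting each letter in the other alphabet) then shows $\|g\|_{\mathcal S}\leq C\|g\|_{\mathcal S'}$ and $\|g\|_{\mathcal S'}\leq C\|g\|_{\mathcal S}$ for every $g\in G$.

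Next I would unpack the definitions. Fix $Q\in\mathcal Q$ and $n\in\naturals$, and let $g\in G-Q$ satisfy $\|g\|_{\mathcal S'}\leq n$. By the Lipschitz comparison, $\|g\|_{\mathcal S}\leq Cn$, so by the very definition of $\sepgrowth$,
\[
D_G^{\Omega_Q}(g)\leq \sepgrowth^{\mathcal Q}_{G,\mathcal S}(Q,Cn).
\]
Taking the maximum over all such $g$ yields
\[
\sepgrowth^{\mathcal Q}_{G,\mathcal S'}(Q,n)\leq \sepgrowth^{\mathcal Q}_{G,\mathcal S}(Q,Cn),
\]
which implies the stated inequality (and is in fact sharper than required). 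The final clause about asymptotic growth rates follows by applying the same bound with the roles of $\mathcal S$ and $\mathcal S'$ exchanged, so that $\sepgrowth^{\mathcal Q}_{G,\mathcal S}$ and $\sepgrowth^{\mathcal Q}_{G,\mathcal S'}$ dominate each other after composition with a linear rescaling of the second variable.

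There is really no main obstacle here; the argument is a direct transcription of the standard fact that changing finite generating sets only distorts the word metric by a multiplicative constant, combined with the observation that the separability set $\Omega_Q$ and the membership condition $g\notin \Delta$ do not depend on the choice of generating set. The only small thing to verify is that the constant $C$ is uniform in $Q\in\mathcal Q$ and in $n$, which is immediate because $C$ depends only on the pair $(\mathcal S,\mathcal S')$.
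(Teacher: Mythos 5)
Your proof is correct and is the standard bi-Lipschitz change-of-generators argument; the paper itself does not include a proof but instead cites \cite[Section~2]{BouRabeeHagenPatel:res_finite_special}, where the same argument appears in slightly greater generality. You are also right that the sharper bound without the outer factor of $C$ holds; the paper's statement is simply a weaker, more symmetric-looking form.
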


\noindent(Similar statements assert that upper bounds on separability growth are inherited by finite-index supergroups and arbitrary finitely-generated subgroups but we do not use, and thus omit, these.)

\subsection{Nonpositively-curved cube complexes}\label{sec:npccc}
We assume familiarity with nonpositively-curved and CAT(0) cube complexes and refer the reader to e.g.~\cite{Hagen:QuasiArb,Haglund:graph_products,WiseNotes, Wise:QCH} for background.  We now make explicit some additional notions and terminology, related to convex subcomplexes, which are discussed in greater depth in~\cite{BehrstockHagenSisto:curve_graph}.  We also discuss some basic facts about RAAGs and Salvetti complexes.  Finally, we will use the method of \emph{canonical completion}, introduced in~\cite{HaglundWise:special}, and refer the reader to~\cite[Lemma~2.8]{BouRabeeHagenPatel:res_finite_special} for the exact statement needed here.

\subsubsection{Local isometries, convexity, and gates}\label{sec:convexity}
A \emph{local isometry} $\phi:Y\rightarrow X$ of cube complexes is a locally injective combinatorial map with the property that, if $e_1,\ldots,e_n$ are $1$--cubes of $Y$ all incident to a $0$--cube $y$, and the (necessarily distinct) $1$--cubes $\phi(e_1),\ldots,\phi(e_n)$ all lie in a common $n$--cube $c$ (containing $\phi(y)$), then $e_1,\ldots,e_n$ span an $n$--cube $c'$ in $Y$ with $\phi(c')=c$.  If $\phi:Y\rightarrow X$ is a local isometry and $X$ is nonpositively-curved, then $Y$ is as well.  Moreover, $\phi$ lifts to an embedding $\tilde\phi:\widetilde Y\rightarrow\widetilde X$ of universal covers, and $\tilde\phi(\widetilde Y)$ is \emph{convex} in $\widetilde X$ in the following sense.

Let $\widetilde X$ be a CAT(0) cube complex.  The subcomplex $K\subseteq\widetilde X$ is \emph{full} if $K$ contains each $n$--cube of $\widetilde X$ whose $1$-skeleton appears in $K$.  If $K$ is full, then $K$ is \emph{isometrically embedded} if $K\cap \bigcap_iH_i$ is connected whenever $\{H_i\}$ is a set of pairwise-intersecting hyperplanes of $\widetilde X$.  Equivalently, the inclusion $K^{(1)}\hookrightarrow\widetilde X^{(1)}$ is an isometric embedding with respect to the graph-metric.  If the inclusion $K\hookrightarrow\widetilde X$ of the full subcomplex $K$ is a local isometry, then $K$ is \emph{convex}.  Note that a convex subcomplex is necessarily isometrically embedded, and in fact $K$ is convex if and only if $K^{(1)}$ is metrically convex in $\widetilde X^{(1)}$.  A convex subcomplex $K$ is a CAT(0) cube complex in its own right, and its hyperplanes have the form $H\cap K$, where $K$ is a hyperplane of $\widetilde X$.  Moreover, if $K$ is convex, then hyperplanes $H_1\cap K,H_2\cap K$ of $K$ intersect if and only if $H_1\cap H_2\neq\emptyset$.  We often say that the hyperplane $H$ \emph{crosses} the convex subcomplex $K$ to mean that $H\cap K\neq\emptyset$ and we say the hyperplanes $H,H'$ \emph{cross} if they intersect.

Hyperplanes are an important source of convex subcomplexes, in two related ways.  First, recall that for all hyperplanes $H$ of $\widetilde X$, the carrier $\neb(H)$ is a convex subcomplex.  Second, $\neb(H)\cong H\times[-\frac{1}{2},\frac{1}{2}]$, and the subcomplexes $H\times\{\pm\frac{1}{2}\}$ of $\widetilde X$ ``bounding'' $\neb(H)$ are convex subcomplexes isomorphic to $H$ (when $H$ is given the cubical structure in which its $n$--cubes are midcubes of $(n+1)$--cubes of $\widetilde X$).  A subcomplex of the form $H\times\{\pm\frac{1}{2}\}$ is a \emph{combinatorial hyperplane}. The \emph{convex hull} of a subcomplex $\mathcal S\subset\widetilde X$ is the intersection of all convex subcomplexes that contain $\mathcal S$; see~\cite{Haglund:graph_products}.

Let $K\subseteq\widetilde X$ be a convex subcomplex.  Then there is a map $\mathfrak g_K:\widetilde X^{(0)}\rightarrow K$ such that for all $x\in\widetilde X^{(0)}$, the point $\gate{x}{K}$ is the unique closest point of $K$ to $x$.  (This point is often called the \emph{gate} of $x$ in $K$; gates are discussed further in \cite{Chepoi:median} and \cite{BandeltChepoi_survey}.)  This map extends to a cubical map $\mathfrak g_{K}:\widetilde X\rightarrow K$, the \emph{gate map}.  See e.g.~\cite{BehrstockHagenSisto:curve_graph} for a detailed discussion of the gate map in the language used here; we use only that it extends the map on $0$--cubes and has the property that for all $x,y$, if $\gate{x}{K},\gate{y}{K}$ are separated by a hyperplane $H$, then the same $H$ separates $x$ from $y$.  Finally, the hyperplane $H$ separates $x$ from $\gate{x}{K}$ if and only if $H$ separates $x$ from $K$.  The gate map allows us to define the \emph{projection} of the convex subcomplex $K'$ of $\widetilde X$ onto $K$ to be $\gate{K}{K'}$, which is the convex hull of the set $\{\gate{x}{K} \in K : x \in K'^{(0)} \}$.  Convex subcomplexes $K,K'$ are \emph{parallel} if $\gate{K}{K'}=K'$ and $\gate{K'}{K}=K$.  Equivalently, $K,K'$ are \emph{parallel} if and only if, for each hyperplane $H$, we have $H\cap K\neq\emptyset$ if and only if $H\cap K'\neq\emptyset$.  Note that parallel subcomplexes are isomorphic.

\begin{rem}\label{rem:structure_of_convex_hulls_of_pairs}
We often use the following facts.  Let $K,K'$ be convex subcomplexes of $\widetilde X$.  Then the convex hull $C$ of $K\cup K'$ contains the union of $K,K'$ and a convex subcomplex of the form $G_K(K')\times\hat\gamma$, where $G_K(K')$ is the image of the gate map discussed above and $\hat\gamma$ is the convex hull of a geodesic segment $\gamma$ joining a closest pair of $0$--cubes in $K,K'$, by~\cite[Lemma~2.4]{BehrstockHagenSisto:curve_graph}.  A hyperplane $H$ crosses $K$ and $K'$ if and only if $H$ crosses $G_K(K')$; the hyperplane $H$ separates $K,K'$ if and only if $H$ crosses $\hat\gamma$.  All remaining hyperplanes either cross exactly one of $K,K'$ or fail to cross $C$.  Observe that the set of hyperplanes separating $K,K'$ contains no triple $H,H',H''$ of disjoint hyperplanes, none of which separates the other two.  (Such a configuration is called a \emph{facing triple}.)
\end{rem}

\subsubsection{Salvetti complexes and special cube complexes}\label{sec:salvetti}
Let $\Gamma$ be a simplicial graph and let $A_\Gamma$ be the corresponding right-angled Artin group (RAAG), i.e. the group presented by $$\big\langle V(\Gamma)\mid [v,w],\,\{v,w\}\in E(\Gamma)\big\rangle,$$ where $V(\Gamma)$ and $E(\Gamma)$ respectively denote the vertex- and edge-sets of $\Gamma$.  The phrase \emph{generator of $\Gamma$} refers to this presentation; we denote each generator of $A_\Gamma$ by the corresponding vertex of $\Gamma$.  

The RAAG $A_\Gamma$ is isomorphic to the fundamental group of the \emph{Salvetti complex} $S_\Gamma$, introduced in~\cite{CharneyDavis:salvetti_cubes}, which is a nonpositively-curved cube complex with one $0$--cube $x$, an oriented $1$--cube for each $v\in V(\Gamma)$, labeled by $v$, and an $n$--torus (an $n$--cube with opposite faces identified) for every $n$--clique in $\Gamma$.  

A cube complex $X$ is \emph{special} if there exists a simplicial graph $\Gamma$ and a local isometry $X\rightarrow S_\Gamma$ inducing a monomorphism $\pi_1X\rightarrow A_\Gamma$ and a $\pi_1X$-equivariant embedding $\widetilde X\rightarrow\widetilde S_\Gamma$ of universal covers whose image is a convex subcomplex.  Specialness allows one to study geometric features of $\pi_1X$ by working inside of $\widetilde S_\Gamma$, which has useful structure not necessarily present in general CAT(0) cube complexes; see Section~\ref{sec:frames}.  Following Haglund-Wise~\cite{HaglundWise:special}, a group $G$ is \emph{(virtually) [compact] special} if $G$ is (virtually) isomorphic to the fundamental group of a [compact] special cube complex.

\subsubsection{Cubical features particular to Salvetti complexes}\label{sec:frames}
Let $\Gamma$ be a finite simplicial graph and let $\Lambda$ be an induced subgraph of $\Gamma$.  The inclusion $\Lambda\hookrightarrow\Gamma$ induces a monomorphism $A_\Lambda\rightarrow A_\Gamma$.  In fact, there is an injective local isometry $S_\Lambda\rightarrow S_\Gamma$ inducing $A_\Lambda\rightarrow A_\Gamma$.  Hence each conjugate $A_\Lambda^g$ of $A_\Lambda$ in $A_\Gamma$ is the stabilizer of a convex subcomplex $g\widetilde S_\Lambda\subseteq\widetilde S_\Gamma$.  A few special cases warrant extra consideration.

When $\Lambda\subset\Gamma$ is an $n$-clique, for some $n\geq 1$, then $S_\Lambda\subseteq S_\Gamma$ is an $n$--torus, which is the Salvetti complex of the sub-RAAG isomorphic to $\integers^n$ generated by $n$ pairwise-commuting generators.  In this case, $S_\Lambda$ is a \emph{standard $n$-torus} in $S_\Gamma$.  (When $n=1$, $S_\Lambda$ is a \emph{standard circle}.)  Each lift of $\widetilde S_\Lambda$ to $\widetilde S_\Gamma$ is a \emph{standard flat}; when $n=1$, we use the term \emph{standard line}; a compact connected subcomplex of a standard line is a \emph{standard segment}.  The labels and orientations of $1$--cubes in $S_\Gamma$ pull back to $\widetilde S_\Gamma$; a standard line is a convex subcomplex isometric to $\reals$, all of whose $1$--cubes have the same label, such that each $0$--cube has one incoming and one outgoing $1$--cube.

When $\link(v)$ is the link of a vertex $v$ of $\Gamma$, the subcomplex $S_{\link(v)}$ is an immersed combinatorial hyperplane in the sense that $\widetilde S_{\link(v)}$ is a combinatorial hyperplane of $\widetilde S_\Gamma$.  There is a corresponding hyperplane, whose carrier is bounded by $\widetilde S_{\link(v)}$ and $v\widetilde S_{\link(v)}$, that intersects only $1$--cubes labeled by $v$.  Moreover, $\widetilde S_{\link v}$ is contained in $\widetilde S_{\cstar(v)}$, where $\cstar(v)$ is the star of $v$, i.e. the join of $v$ and ${\link(v)}$.  It follows that $\widetilde S_{\cstar(v)}\cong\widetilde S_{\link(v)}\times \widetilde S_v$, where $\widetilde S_v$ is a standard line.  Note that the combinatorial hyperplane $\widetilde S_{\link(v)}$ is parallel to $v^k\widetilde S_{\link(v)}$ for all $k\in\integers$.  Likewise, $\widetilde S_v$ is parallel to $g\widetilde S_v$ exactly when $g\in A_\Lambda$, and parallel standard lines have the same labels.  We say $\widetilde S_v$ is a \emph{standard line dual to $\widetilde S_{\link(v)}$}, and is a standard line dual to any hyperplane $H$ such that $N(H)$ has $\widetilde S_{\link(v)}$ as one of its bounding combinatorial hyperplanes.    

\begin{rem}
We warn the reader that a given combinatorial hyperplane may correspond to distinct hyperplanes whose dual standard lines have different labels; this occurs exactly when there exist multiple vertices in $\Gamma$ whose links are the same subgraph.  However, the standard line dual to a genuine (non-combinatorial) hyperplane is uniquely-determined up to parallelism.
\end{rem}

\begin{defn}[Frame]\label{defn:frame}
Let $K\subseteq\widetilde S_\Gamma$ be a convex subcomplex and let $H$ be a hyperplane.  Let $L$ be a standard line dual to $H$.  The \emph{frame} of $H$ is the convex subcomplex $H'\times L\subseteq\widetilde S_\Gamma$ described above, where $H'$ is a combinatorial hyperplane bounding $N(H)$.  If $K\subseteq\widetilde S_\Gamma$ is a convex subcomplex, and $H$ intersects $K$, then the \emph{frame of $H$ in $K$} is the complex $K\cap(H\times L)$.  It is shown in~\cite{BouRabeeHagenPatel:res_finite_special} that the frame of $H$ in $K$ has the form $(H\cap K)\times(L\cap K)$, provided that $L$ is chosen in its parallelism class to intersect $K$.  Note that the frame of $H$ is in fact well-defined, since all possible choices of $L$ are parallel.
\end{defn}

\section{Consequences of Theorem~\ref{thmi:main}}\label{sec:consequences}
Assuming Theorem~\ref{thmi:main}, we quantify separability of cubically convex-cocompact subgroups of special groups with the proofs of Corollaries~\ref{cor:qcerf_special} and \ref{cor:hyperbolic_1}, before proving Theorem~\ref{thmi:main} in the next section.

\begin{proof}[Proof of Corollary~\ref{cor:qcerf_special}]
Let $\Gamma$ be a finite simplicial graph so that there is a local isometry $X\rightarrow S_\Gamma$.  Let $Q\in\mathcal Q_R$ be represented by a local isometry $Z\rightarrow X$.  Then for all $g\in\pi_1X-\pi_1Z$, by Theorem~\ref{thmi:main}, there is a local isometry $Y\rightarrow S_\Gamma$ such that $Y$ contains $Z$ as a locally convex subcomplex, and $g\not\in\pi_1Y$, and $|Y^{(0)}|\leq|Z^{(0)}|(|g|+1)$.  Applying canonical completion~\cite{HaglundWise:special} to $Y\rightarrow S_\Gamma$ yields a cover $\widehat S_\Gamma\rightarrow S_\Gamma$ in which $Y$ embeds; this cover has degree $|Y^{(0)}|$ by ~\cite[Lemma~2.8]{BouRabeeHagenPatel:res_finite_special}. Let $H'=\pi_1\widehat S_\Gamma\cap\pi_1 X$, so that $\pi_1 Z\leq H'$, and $g\not\in H'$, and $[\pi_1X:H']\leq|Z^{(0)}|(|g|+1)$.  The first claim follows.  

Let $G\cong\pi_1X$ with $X$ compact special, $Q\leq G$, and the convex hull of $Q\tilde x$ in $\widetilde X$ lies in $\neb_K(Q\tilde X)$. Then the second claim follows since we can choose $Z$ to be the quotient of the hull of $Q\tilde x$ by the action of $Q$, and $|Z^{(0)}|\leq\growth_{\widetilde X}(K)$.
\end{proof}

In general, the number of 0--cubes in $Z$ is computable from the quasiconvexity constant of a $Q$-orbit in $\widetilde X^{(1)}$ by~\cite[Theorem~2.28]{Haglund:graph_products}).  In the hyperbolic case, we obtain Corollary~\ref{cor:hyperbolic_1} in terms of the quasiconvexity constant, without reference to any particular cube complex:

\begin{proof}[Proof of Corollary~\ref{cor:hyperbolic_1}]
We use Corollary~\ref{cor:qcerf_special} when $J=1$, and promote the result to a polynomial bound when $J\geq 1$. Let $Q\in\mathcal Q_K$ and let $g \in G - Q$.

\emph{The special case:}  Suppose $J=1$ and let $X$ be a compact special cube complex with $G\cong\pi_1X$. Let $Z\rightarrow X$ be a compact local isometry representing the inclusion $Q\rightarrow G$.  Such a complex exists by quasiconvexity of $Q$ and~\cite[Theorem~2.28]{Haglund:graph_products}, although we shall use the slightly more computationally explicit proof in~\cite{SageevWise:cores}.  Let $A'\geq1,B'\geq0$ be constants such that an orbit map $(G,\|-\|_{\mathcal S})\rightarrow(\widetilde X^{(1)},\dist)$ is an $(A',B')$-quasi-isometric embedding, where $\dist$ is the graph-metric.  Then there exist constants $A,B$, depending only on $A',B'$ and hence on $\|-\|_{\mathcal S}$, such that $Qx$ is $(AK+B)$-quasiconvex, where $x$ is a $0$--cube in $\widetilde Z\subset\widetilde X$.  By the proof of Proposition~3.3 of~\cite{SageevWise:cores}, the convex hull $\widetilde Z$ of $Qx$ lies in the $\rho$-neighborhood of $Qx$, where $\rho=AK+B+\sqrt{\dimension X}+\delta'\left(\csc\left(\frac{1}{2}\sin^{-1}(\frac{1}{\sqrt{\dimension X}}\right)+1\right)$ and $\delta'=\delta'(\delta,A',B')$.  Corollary~\ref{cor:qcerf_special} provides $G'\leq G$ with $g\not\in G'$, and $[G:G']\leq|Z^{(0)}|(|g|+1)$.  But $|g|+1\leq A'\|g\|_{\mathcal S}+B'+1$, while $|Z^{(0)}|\leq \growth_{\widetilde X}(\rho)$.  Thus $[G:G']\leq \growth_{\widetilde X}(\rho)A'\|g\|_{\mathcal S}+\growth_{\widetilde X}(\rho)B'+\growth_{\widetilde X}(\rho)$, so that there exists $P_1$ such that $$\sepgrowth^{\mathcal Q_K}_{G,\mathcal S}(Q,n)\leq P_1\growth_{\widetilde X}(P_1K) n$$ for all $K,Q\in\mathcal Q_K,n\in\naturals$, where $P_1$ depends only on $X$.  

\emph{The virtually special case:}  Now suppose that $J\geq 1$. We have a compact special cube complex $X$, and $[G:G']\leq J!$, where $G'\cong\pi_1X$ and $G'\triangleleft G$.  Let $Q\leq G$ be a $K$-quasiconvex subgroup. By Lemma~\ref{lem:quasiconvexity_intersection}, there exists $C=C(G,\mathcal S)$ such that $Q\cap G'$ is $CJ!(K+1)$-quasiconvex in $G$, and thus is $P_2CJ!(K+1)$-quasiconvex in $G'$, where $P_2$ depends only on $G$ and $\mathcal{S}$.

Let $g\in G-Q$.  Since $G'\triangleleft G$, the product $QG'$ is a subgroup of $G$ of index at most $J!$ that contains $Q$.  Hence, if $g\not\in QG'$, then we are done.  We thus assume $g\in QG'$.  Hence we can choose a left transversal $\{q_1,\ldots,q_s\}$ for $Q\cap G'$ in $Q$, with $s\leq J!$ and $q_1=1$. Write $g=q_ig'$ for some $i \leq s$, with $g'\in G'$.  Suppose that we have chosen each $q_i$ to minimize $\|q_i\|_{\mathcal S}$ among all elements of $q_i(Q\cap G')$, so that, by Lemma~\ref{lem:short_transversal}, $\|q_i\|\leq J!$ for all $i$.  Hence $\|g'\|_{\mathcal S}\leq (\|g\|_{\mathcal S}+J!)$.

By the first part of the proof, there exists a constant $P_1$, depending only on $G,G',\mathcal S$, and a subgroup $G''\leq G'$ such that $Q\cap G'\leq G''$, and $g'\not\in G''$, and 
$$[G':G'']\leq P_1\growth_{G'}(P_1P_2CJ!(K+1))\|g'\|_{\mathcal S} \leq P_1\growth_G(P_1P_2CJ!(K+1))\|g'\|_{\mathcal S}.$$

Let $G'''=\cap_{i=1}^sq_iG''q_i^{-1}$, so that $g'\not\in G'''$, and $Q\cap G'\leq G'''$ (since $G'$ is normal), and 
$$[G':G''']\leq (P_1\growth_G(P_1P_2CJ!(K+1))\|g'\|_{\mathcal S})^s.$$
Finally, let $H=QG'''$.  This subgroup clearly contains $Q$.  Suppose that $g=q_ig'\in H$.  Then $g'\in QG'''$, i.e. $g'=ag'''$ for some $a\in Q$ and $g'''\in G'''$.  Since $g'\in G'$ and $G'''\leq G'$, we have $a\in Q\cap G'$, whence $a\in G'''$, by construction.  This implies that $g'\in G'''\leq G''$, a contradiction.  Hence $H$ is a subgroup of $G$ separating $g$ from $Q$.  Finally, $$[G:H]\leq[G:G''']\leq J!\left[P_1\growth_G(P_1P_2CJ!(K+1))(\|g\|_{\mathcal S}+J!)\right]^{J!},$$
and the proof is complete.
\end{proof}

\begin{lem}\label{lem:quasiconvexity_intersection}
Let the group $G$ be generated by a finite set $\mathcal S$ and let $(G,\|-\|_{\mathcal S})$ be $\delta$-hyperbolic.  Let $Q\leq G$ be $K$-quasiconvex, and let $G'\leq G$ be an index-$I$ subgroup.  Then $Q\cap G'$ is $CI(K+1)$-quasiconvex in $G$ for some $C$ depending only on $\delta$ and $\mathcal S$.
\end{lem}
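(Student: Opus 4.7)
\emph{Proof plan.} The plan is to reduce quasiconvexity of $Q \cap G'$ in $G$ to a generating-set argument inside $Q$, exploiting the observation that $[Q : Q \cap G'] \leq [G : G'] = I$. Fix $h \in Q \cap G'$ and a geodesic $\gamma$ in the Cayley graph of $(G, \mathcal S)$ from $1$ to $h$, of length $n = \|h\|_{\mathcal S}$; the goal is to show that every vertex $\gamma(t)$ lies within distance $CI(K+1)$ of $Q \cap G'$.

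First, I will use $K$-quasiconvexity of $Q$ to produce an approximating sequence $q_0 = 1, q_1, \ldots, q_n = h$ in $Q$ with $\|\gamma(t)^{-1} q_t\|_{\mathcal S} \leq K$, which forces $\|q_t^{-1} q_{t+1}\|_{\mathcal S} \leq 2K+1$. Setting $T := \{q \in Q : \|q\|_{\mathcal S} \leq 2K+1\}$, each consecutive difference $q_t^{-1} q_{t+1}$ lies in $T$, so $T$ is a finite symmetric generating set for $Q$.

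Next, since the Schreier coset graph of $Q$ modulo $Q \cap G'$ with respect to $T$ has at most $I$ vertices, any spanning tree of it has diameter at most $I - 1$. Hence for every $q_t$ there exist $s_1, \ldots, s_r \in T$ with $r \leq I - 1$ such that $p_t := q_t (s_1 \cdots s_r)^{-1}$ lies in $Q \cap G'$. This yields $\|q_t^{-1} p_t\|_{\mathcal S} \leq (I-1)(2K+1)$, and hence $\|\gamma(t)^{-1} p_t\|_{\mathcal S} \leq K + (I-1)(2K+1) \leq CI(K+1)$ for a suitable absolute constant $C$.

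I do not anticipate a substantive obstacle; the only nontrivial ingredient is the classical fact that a $K$-quasiconvex subgroup is generated by its elements of $\mathcal S$-length at most $2K+1$, which is exactly what the approximating-sequence construction delivers. In particular, the argument nowhere invokes $\delta$-hyperbolicity of $G$; the dependence on $\delta$ in the statement is present only for uniformity with the surrounding setup, and $C$ may in fact be taken to be a small absolute constant (for instance $C = 2$).
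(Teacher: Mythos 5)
Your proof is correct, and it takes a genuinely different (and more elementary) route than the paper's. The paper first shows that $(Q,\|-\|_{\mathcal T})\hookrightarrow(G,\|-\|_{\mathcal S})$ is a $(2K+1,0)$--quasi-isometric embedding (with $\mathcal T$ the short generators of $Q$), then uses Lemma~\ref{lem:short_transversal} to get a further generating set realising $Q\cap G'$ as $((2I+1)(2K+1),0)$--quasi-isometrically embedded in $G$, and finally invokes Lemma~\ref{lem:geodesics_fellowtravel} --- a Morse-lemma statement that requires $\delta$--hyperbolicity --- to convert the quasi-isometric embedding into quasiconvexity, with a constant depending on the sublinear function $f$ and hence on $\delta$. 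You instead stay at the level of a single geodesic $\gamma$ from $1$ to $h\in Q\cap G'$: $K$--quasiconvexity of $Q$ gives approximants $q_t\in Q$ at distance $\leq K$ from $\gamma(t)$, and the Schreier graph of $Q$ modulo $Q\cap G'$ with respect to $T=\{q\in Q:\|q\|_{\mathcal S}\leq 2K+1\}$ (finite, with $\leq I$ vertices) lets you move $q_t$ into $Q\cap G'$ at additional cost at most $(I-1)(2K+1)$, for a total of $K+(I-1)(2K+1)\leq CI(K+1)$ with $C$ absolute. This bypasses Lemma~\ref{lem:geodesics_fellowtravel} entirely, so $\delta$--hyperbolicity is not used and the constant $C$ can be taken independent of $\delta$ and $\mathcal S$; the only substantive input is the same Bridson--Haefliger short-generators fact the paper uses. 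One cosmetic note: whether you write $p_t=q_t(s_1\cdots s_r)^{-1}$ or $p_t=q_ts_1\cdots s_r$ depends on whether you use left or right cosets in the Schreier graph, but either convention gives the stated bound; and strictly speaking the statement ``$T$ generates $Q$'' requires running your approximating-sequence argument for arbitrary $h\in Q$, not just $h\in Q\cap G'$, which is exactly what the cited classical fact supplies.
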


\begin{proof}
Since $Q$ is $K$-quasiconvex in $G$, it is generated by a set $\mathcal T$ of $q\in Q$ with $\|q\|_{\mathcal S}\leq 2K+1$, by~\cite[Lemma~III.$\Gamma$.3.5]{BridsonHaefliger}.  A standard argument shows $(Q,\|-\|_{\mathcal T})\hookrightarrow(G,\|-\|_{\mathcal S})$ is a $(2K+1,0)$-quasi-isometric embedding.  Lemma~\ref{lem:short_transversal} shows that $Q\cap G'$ is $I$-quasiconvex in $(Q,\|-\|_{\mathcal T})$, since $[Q:Q\cap G']\leq I$.  Hence $Q\cap G'$ has a generating set making it $((2I+1)(2K+1),0)$-quasi-isometrically embedded in $(G,\|-\|_{\mathcal S})$.  Apply Lemma~\ref{lem:geodesics_fellowtravel} to conclude.
\end{proof}

The following lemma is standard, but we include it to highlight the exact constants involved:

\begin{lem}\label{lem:geodesics_fellowtravel}
Let $G$ be a group generated by a finite set $\mathcal S$ and suppose that $(G,\|-\|_{\mathcal S})$ is $\delta$-hyperbolic.  Then there exists a (sub)linear function $f:\naturals\rightarrow\naturals$, depending on $\mathcal S$ and $\delta$, such that $\sigma\subseteq\neb_{f(\lambda)}(\gamma)$ whenever $\gamma:[0,L]\rightarrow G$ is a $(\lambda,0)$--quasigeodesic and $\sigma$ is a geodesic joining $\gamma(0)$ to $\gamma(L)$.
\end{lem}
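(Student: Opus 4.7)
The plan is to recognise this as the classical Morse stability lemma for quasi-geodesics in a $\delta$-hyperbolic space, and to extract the explicit linear-in-$\lambda$ dependence of the fellow-travelling constant from the standard proof.

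First, I would pass from the word metric on $G$ to the Cayley graph $\Gamma=\Gamma(G,\mathcal S)$, which is a proper geodesic metric space that is $\delta'$-hyperbolic for some $\delta'=\delta'(\delta,\mathcal S)$. Interpolating consecutive samples of $\gamma$ by $\Gamma$-geodesics converts $\gamma$ into a continuous $(\lambda,\epsilon_0)$-quasi-geodesic $\widetilde\gamma$ in $\Gamma$, with $\epsilon_0=\epsilon_0(\mathcal S)$ independent of $\lambda$; similarly $\sigma$ lifts to an honest $\Gamma$-geodesic with the same endpoints. It suffices to bound the Hausdorff distance in $\Gamma$ and then round up to get an integer bound on $G$.

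Second, I would apply the Morse lemma in $\Gamma$ (as in, e.g., Bridson--Haefliger III.H.1.7 or Coornaert--Delzant--Papadopoulos): there exists $R=R(\lambda,\epsilon_0,\delta')$ with $\sigma\subseteq\neb_R(\widetilde\gamma)$. The standard argument assumes, for contradiction, that some $p\in\sigma$ lies outside $\neb_R(\widetilde\gamma)$, takes the maximal subarc $\sigma_0\subseteq\sigma$ through $p$ avoiding $\neb_{R/2}(\widetilde\gamma)$, and forms a detour between its endpoints through $\widetilde\gamma$. Bounding the length of this detour from above using the $(\lambda,\epsilon_0)$-quasi-geodesic inequality, and from below using $\delta'$-thinness of the resulting quadrilateral, produces an estimate of the shape $R\leq C_1\lambda+C_2$ with $C_i=C_i(\delta',\epsilon_0)$. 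Setting $f(\lambda):=\lceil C_1\lambda+C_2\rceil$ gives the advertised (sub)linear function from $\naturals$ to $\naturals$.

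The entire argument is classical; the only obstacle is bookkeeping through the standard proof to verify that, because $\epsilon=0$ (so that the auxiliary $\epsilon_0$ arising from interpolation is a constant independent of $\lambda$), the resulting bound is genuinely linear rather than super-linear in $\lambda$. No new geometric ideas enter, and the write-up will either be a direct quantitative citation or a brief self-contained version of the detour argument.
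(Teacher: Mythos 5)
Your approach matches the paper's exactly: the proof given there is simply ``See e.g.\ the proof of~\cite[Theorem~III.H.1.7]{BridsonHaefliger},'' which is precisely the Morse stability argument you reconstruct. One small refinement: if you follow the Bridson--Haefliger proof as written---in particular Lemma~III.H.1.6, which bounds the distance from a point of $\sigma$ to a rectifiable path between its endpoints by $\delta\log_2(\text{length})+1$---the resulting fellow-travelling constant is actually \emph{logarithmic} in $\lambda$ (the tamed quasi-geodesic's relevant subarc has length polynomial in $\lambda$, which only enters under the logarithm), so your linear bound is a safe overestimate and certainly within the advertised ``(sub)linear.''
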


\begin{proof}
See e.g. the proof of~\cite[Theorem~III.H.1.7]{BridsonHaefliger}.
\end{proof}

\begin{lem}\label{lem:short_transversal}
Let $Q$ be a group generated by a finite set $\mathcal S$ and let $Q'\leq Q$ be a subgroup with $[Q:Q']=s<\infty$.  Then there exists a left transversal $\{q_1,\ldots,q_{s}\}$ for $Q'$ such that $\|q_i\|_{\mathcal S}\leq s$ for $1\leq i\leq s$.  Hence $Q'$ is $s$-quasiconvex in $Q$.
\end{lem}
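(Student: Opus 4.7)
The plan is to build the transversal via a breadth-first search in the Schreier graph of $Q/Q'$ and then deduce quasiconvexity from the short coset representatives that this search produces.

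Let $\Sigma$ be the Schreier graph whose vertices are the left cosets $qQ'$ and whose labeled, oriented edges are the triples $(qQ',t,tqQ')$ for $t\in\mathcal S\cup\mathcal S^{-1}$. Because $\mathcal S$ generates $Q$, the graph $\Sigma$ is connected, and because it has exactly $s$ vertices, its diameter is at most $s-1$. Fix a BFS spanning tree $T$ of $\Sigma$ rooted at $Q'=1\cdot Q'$, and for each coset $qQ'$ let $q_i$ be the product, in order, of the edge labels along the unique $T$-path from $Q'$ to $qQ'$. Taking $q_1=1$, the family $\{q_1,\ldots,q_s\}$ is a left transversal for $Q'$, and the BFS depth bound gives $\|q_i\|_{\mathcal S}\le s-1\le s$.

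For the quasiconvexity claim, the same BFS construction applied to the right-coset Schreier graph (equivalently, inverting the transversal just built, since inversion preserves word-length) produces, for every right coset $Q'g$, a representative of $\mathcal S$-length at most $s-1$. Equivalently, for every $g\in Q$ there exists $q\in Q'$ with $\|qg\|_{\mathcal S}\le s-1$. Since the left-invariant Cayley metric satisfies $d(g,q^{-1})=\|g^{-1}q^{-1}\|_{\mathcal S}=\|qg\|_{\mathcal S}$, this shows $d(g,Q')\le s-1\le s$ for every $g\in Q$. In particular, any vertex on any $\mathcal S$-geodesic between two elements of $Q'$ lies within $\mathcal S$-distance $s$ of $Q'$, which is exactly $s$-quasiconvexity.

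The argument has no substantive obstacle: it is elementary graph theory (connectedness plus a vertex count) together with careful bookkeeping to distinguish the left transversal demanded by the statement from the right-coset viewpoint that interacts cleanly with the left-invariant word metric used in the quasiconvexity conclusion.
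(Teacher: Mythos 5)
Your proof is correct and takes essentially the same approach as the paper: both arguments amount to the observation that a shortest coset representative of length $\ell$ yields $\ell+1$ distinct cosets by successive truncation, forcing $\ell\leq s-1$ (the paper phrases this directly in terms of suffixes of a geodesic word, you phrase it as BFS depth in the Schreier graph). Your treatment of the quasiconvexity conclusion, carefully passing from the left transversal to a right transversal so as to interact cleanly with the left-invariant word metric, fills in a step the paper leaves implicit after its ``Hence.''
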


\begin{proof}
Suppose that $q_k=s_{i_k}\cdots s_{i_1}$ is a geodesic word in $\mathcal S\cup\mathcal S^{-1}$ and that $q_k$ is a shortest representative of $q_k Q'$. Let $q_j = s_{i_j} \cdots s_{i_1}$ be the word in $Q$ consisting of the last $j$ letters of $q_k$ for all $1 < j < k$, and let $q_1 = 1$. We claim that each $q_j$ is a shortest representative for $q_j Q'$. Otherwise, there exists $p$ with $\|p\|_{\mathcal S} < j$ such that $q_j Q'$ = $p Q'$. But then $s_k \cdots s_{j+1} p Q' = q_k Q'$, and thus $q_k$ was not a shortest representative. It also follows immediately that $q_j Q' \neq q_{j'} Q'$ for $j \neq j'$. Thus, $q_1, q_2, \dots, q_k$ represent distinct left cosets of $Q'$ provided $k \leq s$, and the claim follows. 
\end{proof}

\begin{rem}[Embeddings in finite covers]\label{rem:lift_to_embeddings}
Given a compact special cube complex $X$ and a compact local isometry $Z\rightarrow X$, Theorem~\ref{thmi:main} gives an upper bound on the minimal degree of a finite cover in which $Z$ embeds; indeed, producing such an embedding entails separating $\pi_1Z$ from finitely many elements in $\pi_1X$.  However, it is observed in~\cite[Lemma~2.8]{BouRabeeHagenPatel:res_finite_special} the Haglund--Wise canonical completion construction~\cite{HaglundWise:special} produces a cover $\widehat X\rightarrow X$ of degree $|Z^{(0)}|$ in which $Z$ embeds.  

\end{rem}

\section{Proof of Theorem~\ref{thmi:main}}\label{sec:main_proof}
In this section, we give a proof of the main technical result.

\begin{defn}\label{rem:parallel}
Let $S_\Gamma$ be a Salvetti complex and let $\widetilde S_\Gamma$ be
its universal cover.  The hyperplanes $H,H'$ of $\widetilde S_\Gamma$
are \emph{collateral} if they have a common dual standard line (equivalently, the same frame). Clearly collateralism is an equivalence relation, and collateral hyperplanes are isomorphic and have the same stabilizer.
\end{defn}

 Being collateral implies that the combinatorial hyperplanes bounding the carrier of $H$ are parallel to those bounding the carrier of $H'$. However, the converse is not true when $\Gamma$ contains
multiple vertices whose links coincide.  In the proof of
Theorem~\ref{thmi:main}, we will always work with hyperplanes, rather than combinatorial hyperplanes, unless we explicitly state that
we are referring to combinatorial hyperplanes.  

\begin{proof}[Proof of Theorem~\ref{thmi:main}]
Let $\tilde x\in\widetilde S_\Gamma$ be a lift of the base $0$--cube $x$ in $S_\Gamma$, and let $\widetilde Z\subseteq\widetilde S_\Gamma$ be the lift of the universal cover of $Z$ containing $\tilde x$. %\begin{com}This is actually uniquely determined since $Z$ embeds in a finite cover of $X$, but this is irrelevant so let's not say anything.---M\end{com}.  
Since $Z\rightarrow S_\Gamma$ is a local isometry, $\widetilde Z$ is convex. Let $\widehat Z\subset\widetilde Z$ be the convex hull of a compact connected fundamental domain for the action of $\pi_1Z\leq A_\Gamma$ on $\widetilde Z$.  Denote by $K$ the convex hull of $\widehat Z\cup\{g\tilde x\}$ and let $\mathfrak S$ be the set of hyperplanes of $\widetilde S_\Gamma$ intersecting $K$.  We will form a quotient of $K$, restricting to $\widehat Z \rightarrow Z$ on $\widehat Z$, whose image admits a local isometry to $S_\Gamma$.

\emph{The subcomplex $\floor Z$:} Let $\mathfrak L$ be the collection of standard segments $\ell$ in $K$ that map to standard circles in $S_\Gamma$ with the property that $\ell \cap \widehat Z$ has non-contractible image in $Z$. Let $\floor Z$ be convex hull of $\widehat Z\cup\bigcup_{\ell\in\mathfrak L}\ell$, so that $\widehat Z\subseteq\floor Z\subseteq K$.

\emph{Partitioning $\mathfrak S$:}  We now partition $\mathfrak S$ according to the various types of frames in $K$.  First, let $\mathfrak Z$ be the set of hyperplanes intersecting $\widehat Z$. Second, let $\mathfrak N$ be the set of $N\in\mathfrak S - \mathfrak Z$ such that the frame $(N \cap K) \times(L\cap K)$ of $N$ in $K$ has the property that for some choice of $x_0 \in N^{(0)}$, the segment $(\{x_0\} \times L) \cap \widehat Z$ maps to a nontrivial cycle of $1$--cubes in $Z$. Let $n_N \geq 1$ be the length of that cycle.  By convexity of $\widehat Z$, the number $n_N$ is independent of the choice of the segment $L$ within its parallelism class. Note that $\mathfrak N$ is the set of hyperplanes that cross $\floor Z$, but do not cross $\widehat Z$. Hence each $N\in\mathfrak N$ is collateral to some $W\in\mathfrak Z$.  Third, fix a collection $\{H_1,\ldots H_k\}\subset\mathfrak S - \mathfrak Z$ such that:
\begin{enumerate}
 \item \label{item:separation}For $1\leq i\leq k-1$, the hyperplane $H_i$ separates $H_{i+1}$ from $\floor Z$.
 \item \label{item:parallel}For $1\leq i<j\leq k$, if a hyperplane $H$ separates $H_i$ from $H_j$, then $H$ is collateral to $H_\ell$ for some $\ell\in[i,j]$.  Similarly, if $H$ separates $H_1$ from $\floor Z$, then $H$ is collateral to $H_1$, and if $H$ separates $H_k$ from $g\tilde x$, then $H$ is collateral to $H_k$.
 
\item \label{item:pink parallel} For each $i$, the frame $(H_i \cap K) \times L_i$ of $H_i$ in $K$ has the property that for every $h \in H_i^{(0)}$, the image in $Z$ of the segment $(\{h\} \times L_i) \cap \widehat Z$ is empty or contractible.  (Here, $L_i$ is a standard segment of a standard line dual to $H_i$.)
\end{enumerate}

Let $\mathfrak H$ be the set of all hyperplanes of $\mathfrak S - \mathfrak Z$ that are collateral to $H_i$ for some $i$. Condition (3) above ensures that $\mathfrak H \cap \mathfrak N =  \emptyset$, while $\mathfrak H=\emptyset$ only if $K=\floor Z$. Finally, let $\mathfrak B = \mathfrak S - (\mathfrak Z \cup \mathfrak N \cup \mathfrak H)$.  Note that each $B\in\mathfrak B$ crosses some $H_i$. Figure~\ref{fig:hyperplane_families} shows a possible $K$ and various families of hyperplanes crossing it. 

\begin{figure}[h]
\begin{overpic}[trim = 0in 3.2in 0in 3.25in, clip=true, totalheight=0.27\textheight]{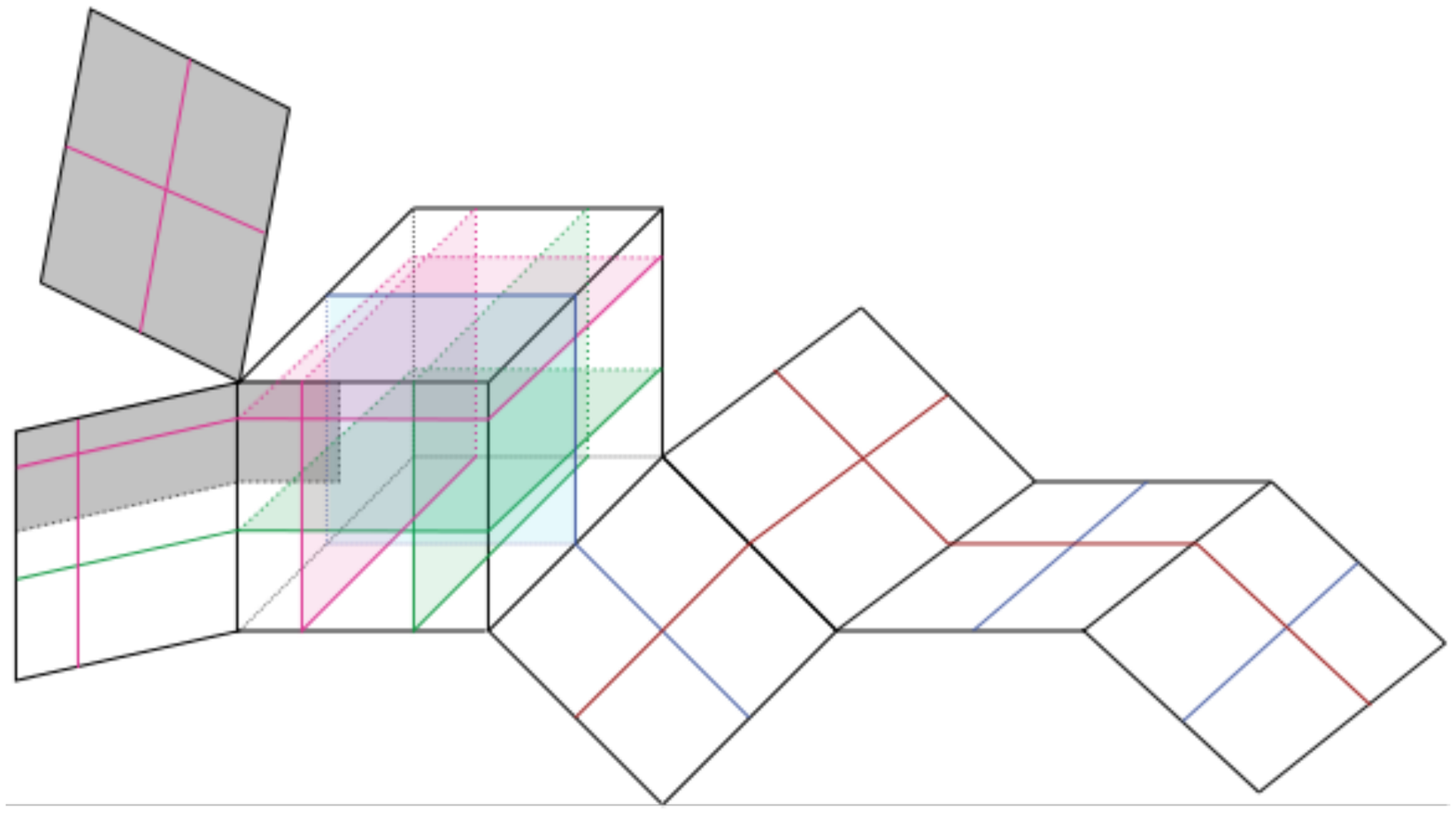}
\put(-0.5,14){\scriptsize{$N_1$}}
\put(39,41){\scriptsize{$N_2$}}
\put(52,4.5){\scriptsize{$H_1$}}
\put(76.5,23){\scriptsize{$H_2$}}
\put(92,16.5){\scriptsize{$H_3$}}
\put(37,4){\scriptsize{$B_1$}}
\put(51,30){\scriptsize{$B_2$}}
\end{overpic}
\caption{Hyperplanes crossing $K$ (the dark shaded area on the left is $\widehat{Z}$). }\label{fig:hyperplane_families}
\end{figure}

\emph{Mapping $\floor Z$ to $Z$:}  We now define a quotient map $\mathfrak q:\floor Z\rightarrow Z$ extending the restriction $\widehat Z\rightarrow Z$ of $\widetilde Z\rightarrow Z$.  Note that if $\mathfrak N=\emptyset$, then $\floor Z=\widehat Z$, and $\mathfrak q$ is just the map $\widehat Z\rightarrow Z$.  Hence suppose $\mathfrak N\neq\emptyset$ and let $\mathfrak N_1,\ldots,\mathfrak N_s$ be the collateralism classes of hyperplanes in $\mathfrak N$, and for $1\leq i\leq s$, let $\mathfrak N'_i$ be the collateralism class of $\mathfrak N_i$ in $\mathfrak S$, i.e. $\mathfrak N_i$ together with a nonempty set of collateral hyperplanes in $\mathfrak Z$.  For each $i$, let $L_i$ be a maximal standard line segment of $\floor Z$, each of whose $1$--cubes is dual to a hyperplane in $\mathfrak N_i'$ and which crosses each element of $\mathfrak N'_i$. For each $i$, let $N_i\in\mathfrak N_i$ be a hyperplane separating $\widehat Z$ from $g\tilde x$.  Then $N_i\cap N_j\neq\emptyset$ for $i\neq j$, since neither separates the other from $\widehat Z$. We can choose the $L_i$ so that there is an isometric embedding $\prod_{i=1}^kL_i\rightarrow\floor Z$, since whether or not two hyperplanes of $\widetilde S_\Gamma$ cross depends only on their collateralism classes.

For each nonempty $I\subseteq\{1,\ldots,k\}$, a hyperplane $W \in\mathfrak Z$ crosses some $U\in\cup_{i\in I}\mathfrak N'_i$ if and only if $W$ crosses each hyperplane collateral to $U$.  Hence, by~\cite[Lemma~7.11]{Hagen:QuasiArb}, there is a maximal convex subcomplex $Y(I)\subset\widehat Z$, defined up to parallelism, such that a hyperplane $W$ crosses each $U\in\cup_{i\in I}\mathfrak N'_i$ if and only if $W\cap Y(I)\neq\emptyset$.  Let $\mathfrak A(I)$ be the set of hyperplanes crossing $Y(I)$.  By the definition of $Y(I)$ and~\cite[Lemma~7.11]{Hagen:QuasiArb}, there is a combinatorial isometric embedding $Y(I)\times\prod_{i\in I}L_i\rightarrow\floor Z$, whose image we denote by $F(I)$ and refer to as a \emph{generalized frame}.  Moreover, for any $0$--cube $z\in\floor Z$ that is not separated from a hyperplane in $\cup_{i\in I}\mathfrak N'_i\cup \mathfrak A(I)$ by a hyperplane not in that set, we can choose $F(I)$ to contain $z$ (this follows from the proof of~\cite[Lemma~7.11]{Hagen:QuasiArb}). Figures~\ref{fig:Nhyperplanes}, \ref{fig:Y(I)1}, \ref{fig:Y(I)2}, and \ref{fig:Y(I)1,2} show possible $\mathfrak{N_i'}$'s and generalized hyperplane frames.

\begin{figure}[h]
\begin{minipage}[b]{0.45\textwidth}
\centering
\begin{overpic}[trim = 0in 3in 3.55in 3in, clip=true, totalheight=0.19\textheight]{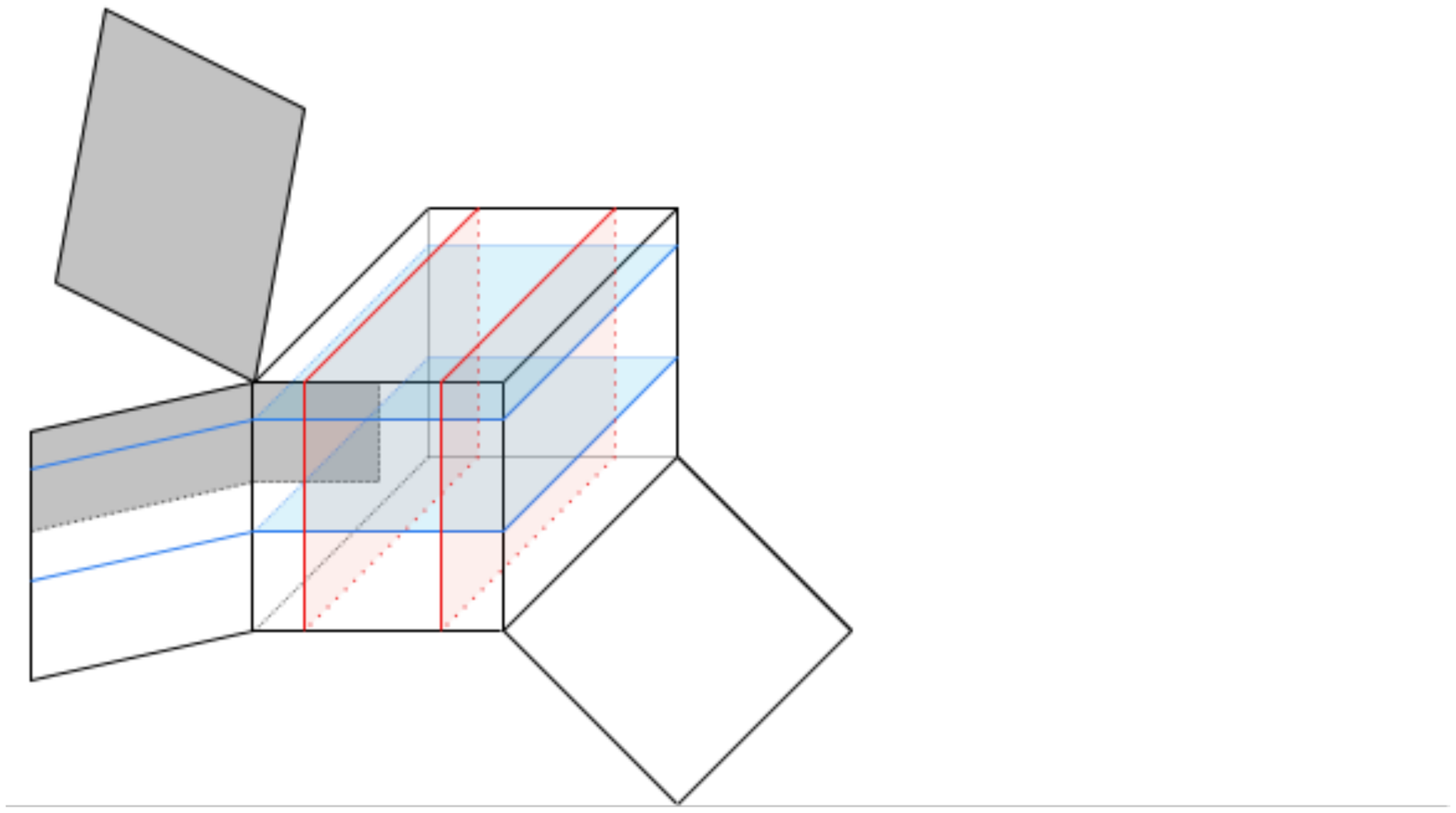}
\put(53,74){\scriptsize{$\mathfrak{N'_1}$}}
\put(-2,25){\scriptsize{$\mathfrak{N'_2}$}}
\end{overpic}
\caption{Collateral families $\mathfrak{N_1'}$ and $\mathfrak{N_2'}$.}\label{fig:Nhyperplanes}
\end{minipage}
\begin{minipage}[b]{.45\textwidth}
\centering
\begin{overpic}[trim = 0in 2.8in 3.55in 3.345in, clip=true, totalheight=0.2\textheight]{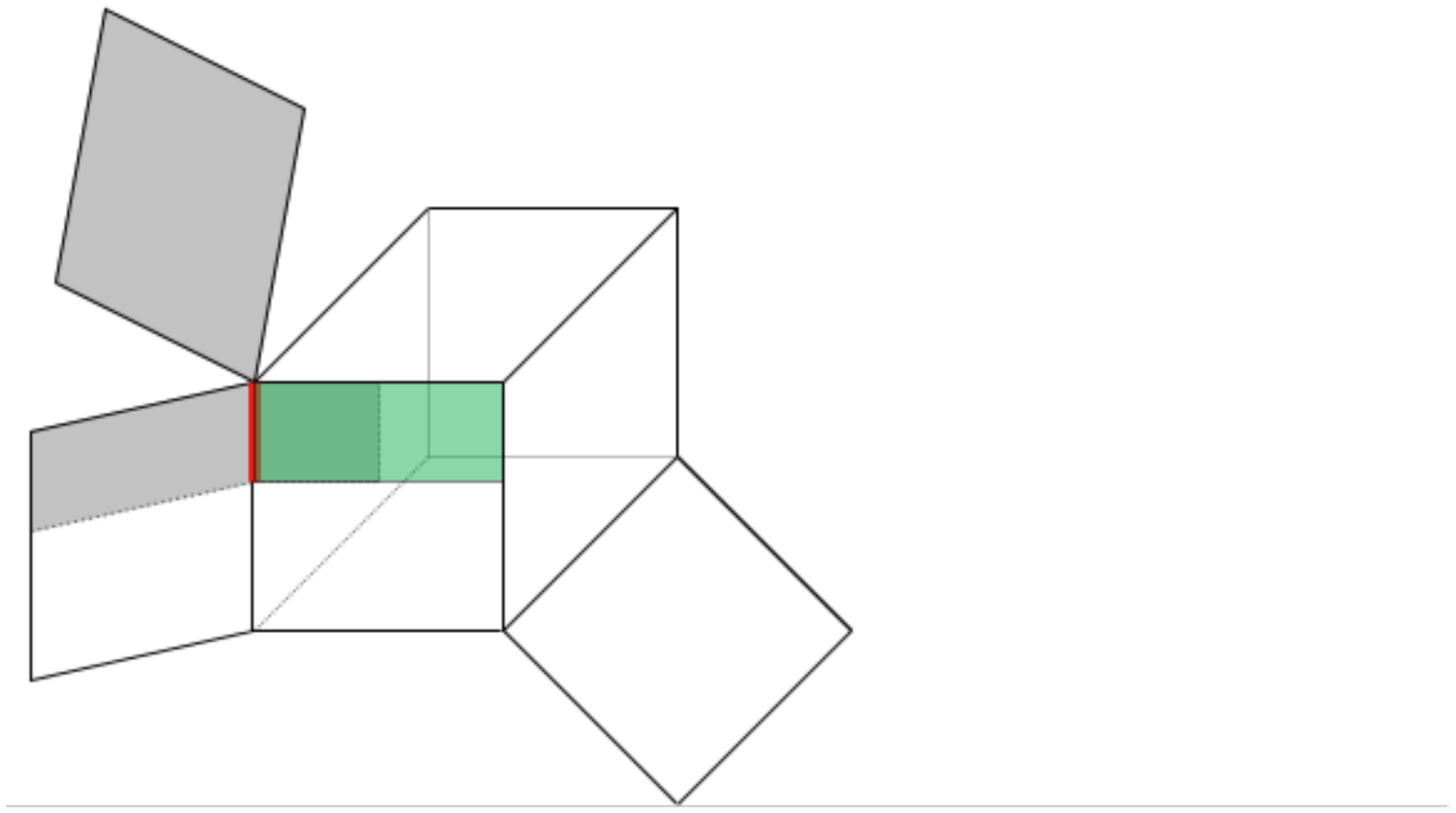}
\end{overpic}
\caption{$Y(\{1\}) \times L_1$.}\label{fig:Y(I)1}
\end{minipage}
\begin{minipage}[b]{.45\textwidth}
\centering
\begin{overpic}[trim = 0in 3in 3.55in 2.8in, clip=true, totalheight=0.2\textheight]{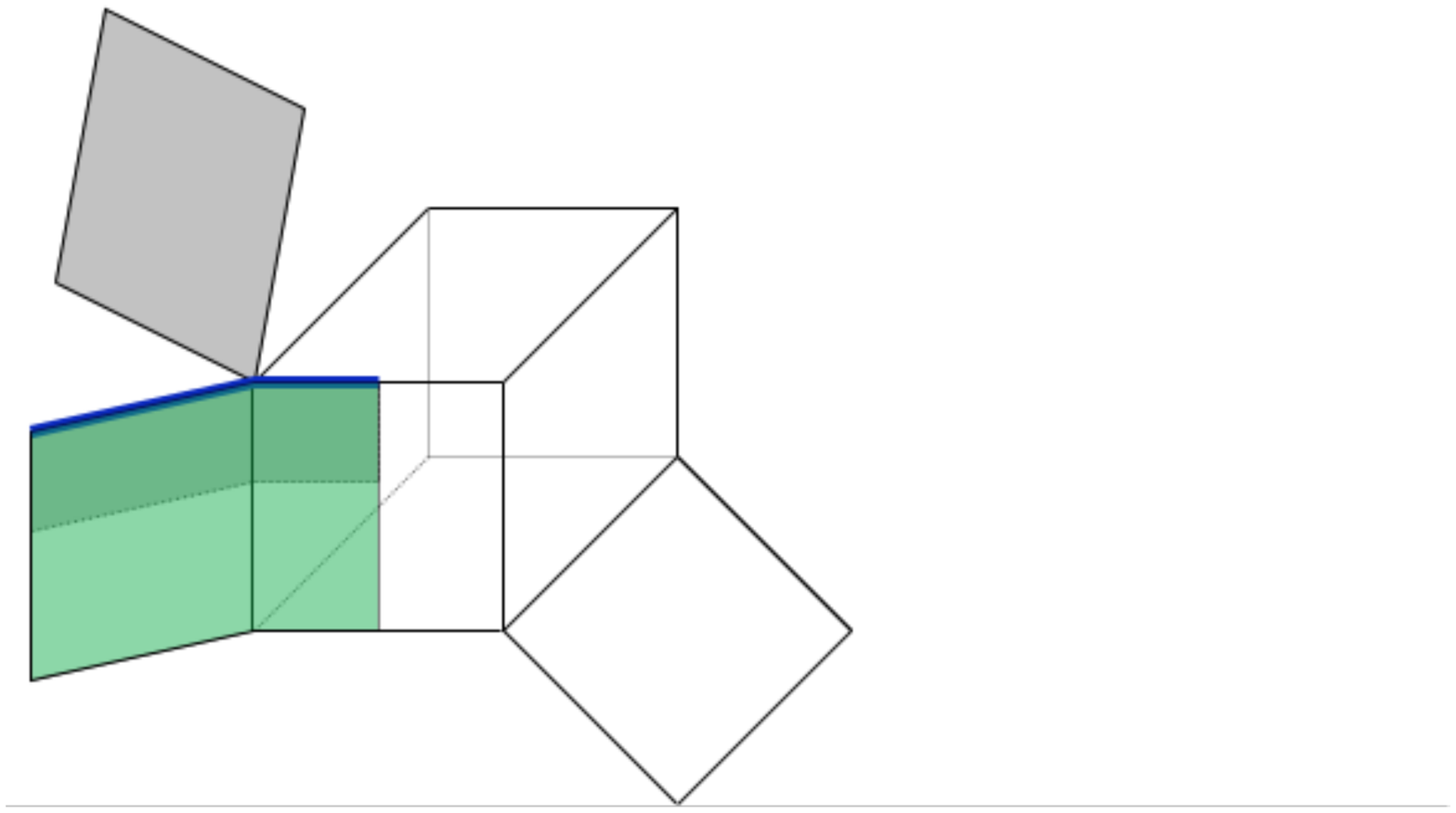}
 %\put(36,63){$A$}
\end{overpic}
\caption{$Y(\{2\}) \times L_2$}\label{fig:Y(I)2}
\end{minipage}
\begin{minipage}[b]{.45\textwidth}
\centering
\begin{overpic}[trim = 0in 3in 3.55in 2.8in, clip=true, totalheight=0.2\textheight]{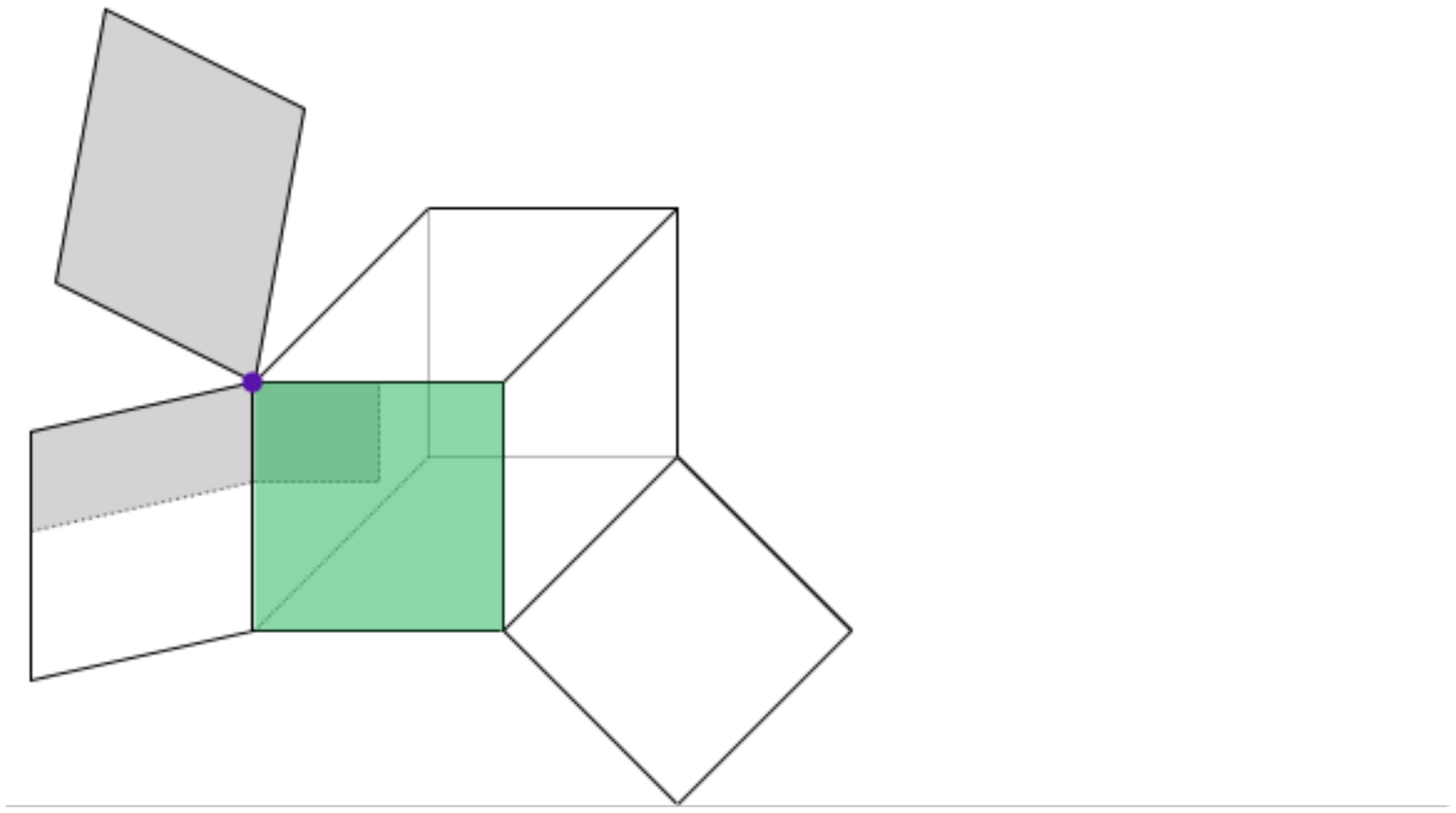}
\end{overpic}
\caption{$Y(\{1,2\}) \times (L_1 \times L_2)$}\label{fig:Y(I)1,2}
\end{minipage}
\end{figure}

To build $\mathfrak q$, we will express $\floor Z$ as the union of $\widehat Z$ and a collection of generalized frames, define $\mathfrak q$ on each generalized frame, and check that the definition is compatible where multiple generalized frames intersect.  Let $z\in\floor Z$ be a $0$--cube.  Either $z\in\widehat Z$, or there is a nonempty set $I\subset\{1,\ldots,k\}$ such that the set of hyperplanes separating $z$ from $\widehat Z$ is contained in $\cup_{i\in I}\mathfrak N'_i$, and each $\mathfrak N'_i$ contains a hyperplane separating $z$ from $\widehat Z$.  If $H\in \mathfrak A(I)\cup\bigcup_{i\in I}\mathfrak N'_i$ is separated from $z$ by a hyperplane $U$, then $U\in \mathfrak A(I)\cup\bigcup_{i\in I}\mathfrak N'_i$, whence we can choose $F(I)$ to contain $z$.  Hence $\floor Z$ is the union of $\widehat Z$ and a finite collection of generalized frames $F(I_1),\ldots,F(I_t)$.

For any $p\in\{1, \dots,t\}$, we have $F(I_p)=Y(I_p)\times\prod_{j\in I_p}L_j$ and we let $\overline Y (I_p) = \image(Y(I_p) \rightarrow Z)$ and let $\overline L_j = \image(L_j\cap\widehat Z \rightarrow Z)$ be the cycle of length $n_{_{N_j}}$ to which $L_j$ maps, for each $j\in I_p$. Note that $Z$ contains $\overline F(I_p) = \overline Y(I_p) \times \prod_{j\in I_P}\overline L_j$ and so we define the quotient map $\mathfrak q_p : F(I_p) \rightarrow Z $ as the product of the above combinatorial quotient maps. Namely, $\mathfrak q_p (y, (r_j)_{j\in I_P}) = (\overline y, (r_j \mod n_{_{N_j}})_{j\in I_p})$ for $y\in Y(I_p)$ and $r_j\in L_j$.

To ensure that $\mathfrak q_p ( F(I_p) \cap F(I_j)) = \mathfrak q_j ( F(I_p) \cap F(I_j))$ for all $i,j\leq t$, it suffices to show that $$F(I_p) \cap F(I_j) := \left(Y(I_p) \times \prod_{k \in I_p} L_k\right) \cap \left(Y(I_j) \times \prod_{\ell \in I_j} L_\ell \right) = \left[Y(I_p) \cap Y(I_j)\right] \times \prod_{k \in I_p \cap I_j} L_k.$$

\noindent This in turn follows from \cite[Proposition 2.5]{CapraceSageev:rank_rigidity}. Hence, the quotient maps $\mathfrak q_p$ are compatible and therefore define a combinatorial quotient map $\mathfrak q: \floor Z \rightarrow Z$ extending the maps $\mathfrak q_p$.

Observe that if $\mathfrak H=\emptyset$, i.e. $K=\floor Z$, then we take $Y=Z$.  By hypothesis, $Z$ admits a local isometry to $S_\Gamma$ and has the desired cardinality.  Moreover, our hypothesis on $g$ ensures that $g\not\in\pi_1Y$, but the map $\mathfrak q$ shows that any closed combinatorial path in $S_\Gamma$ representing $g$ lifts to a (non-closed) path in $Z$, so the proof of the theorem is complete.  Thus we can and shall assume that $\mathfrak H\neq\emptyset$.

\emph{Quotients of $\mathfrak H$-frames:}  To extend $\mathfrak q$ to the rest of $K$, we now describe quotient maps, compatible with the map $\widehat Z \rightarrow Z$, on frames associated to hyperplanes in $\mathfrak H$.  An \emph{isolated $\mathfrak H$-frame} is a frame $(H \cap K) \times L$, where $H \in \mathfrak H$ and $H$ crosses no hyperplane of $\widehat Z$ (and hence crosses no hyperplane of $\floor Z$).  An \emph{interfered $\mathfrak H$-frame} is a frame $(H \cap K) \times L$, where $H \in \mathfrak H$ and $H$ crosses an element of $\mathfrak Z$.  Equivalently, $(H\cap K)\times L$ is interfered if $\gate{\widehat Z}{N(H)}$ contains a $1$--cube and is isolated otherwise.  

Define quotient maps on isolated $\mathfrak H$-frames by the same means as was used for arbitrary frames in \cite{BouRabeeHagenPatel:res_finite_special}: let $(H \cap K) \times L$ be an isolated $\mathfrak H$-frame. Let $\overline H$ be the immersed hyperplane in $S_\Gamma$ to which $H$ is sent by $\widetilde S_\Gamma \rightarrow S_\Gamma$, and let $\overline{H \cap K}$ be the image of $H \cap K$. We form a quotient $Y_H = \overline{H \cap K} \times L$ of every isolated $\mathfrak H$-frame $(H \cap K) \times L$.

Now we define the quotients of interfered $\mathfrak H$-frames.  Let $\widehat A=\gate{\widehat Z}{N(H)}$ and let $A$ be image of $\widehat A$ under $\widehat Z \rightarrow Z$. There is a local isometry $A \rightarrow S_\Gamma$, to which we apply canonical completion to produce a finite cover $\dddot S_\Gamma\rightarrow S_\Gamma$ where $A$ embeds. By Lemma 2.8 of \cite{BouRabeeHagenPatel:res_finite_special}, $\deg(\dddot S_\Gamma\rightarrow S_\Gamma)= \left|\dddot S_\Gamma^{(0)}\right| = \left| A^{(0)}\right| \leq \left|Z^{(0)}\right|$.  Let $\overline{H \cap K} = \image(H \cap K \rightarrow \dddot S_\Gamma)$, and map the interfered $\mathfrak H$-frame $(H \cap K) \times L$ to $Y_H = \overline{H \cap K} \times L$.

\emph{Constructing $Y$:}  We  now construct a compact cube complex $Y'$ from $Z$ and the various quotients $Y_H$.  A hyperplane $W$ in $K$ separates $H_1$ from $\widehat Z$ only if $W\in\mathfrak N$. Each $\mathfrak{H}$-hyperplane frame has the form $(H_i \cap K) \times L_i = (H_i \cap K) \times [0, \, m_i]$, parametrized so that $(H_i \cap K) \times \{0\}$ is the closest combinatorial hyperplane in the frame to $\widehat Z$. We form $Y'(1)$ by gluing $Y_{H_1}$ to $Z$ along the image of $\gate{(H_1 \cap K) \times \{0\}}{\widehat Z}$, enabled by the fact that the quotients of interfered $\mathfrak H$-frames are compatible with $\widehat Z \rightarrow Z$. In a similar manner, form $Y'(i)$ from $Y'(i-1)$ and $Y_{H_i}$ by identifying the image of $(H_{i-1} \cap K) \times \{m_{i-1}\} \cap (H_i \cap K) \times \{0\}$ in $Y_{H_{i-1}} \subset Y'(i-1)$ with its image in $Y_{H_{i}}$. Let $Y' = Y'(k)$.

Let $K'=\floor Z \cup \left[ \bigcup_{H_i \in \mathfrak H} (H_i \cap K) \times L_i \right]$. Since $H_i \cap H_j = \emptyset$ for $i \neq j$, there exists a map $(K', \tilde x) \rightarrow (Y', x)$ and a map $(Y', x) \rightarrow (S_\Gamma, x)$ such that the composition is precisely the restriction to $K'$ of the covering map $(\widetilde S_\Gamma, \tilde x) \rightarrow (S_\Gamma, x)$.

If $Y'\rightarrow S_\Gamma$ fails to be a local isometry, then there exists $i$ and nontrivial open cubes $e\subset\overline{H_{i-1} \cap K} \times \{m_{i-1}\}$ (or $Z$ if $i=1$) and $c\subset\overline{H_i \cap K} \times \{0\}$ such that $S_\Gamma$ contains an open cube $\bar e\times\bar c$, where $\bar e,\bar c$ are the images of $e,c$ under $\widetilde S_\Gamma \rightarrow S_\Gamma$, respectively.  Moreover, since $\gate{H_i \cap K}{\widehat{Z}} \subseteq \gate{H_{i-1} \cap K}{\widehat{Z}}$, we can assume that $\bar c$ is disjoint from each immersed hyperplane of $S_\Gamma$ crossing $Z$.  Hence the closure $\closure{\bar c}$ is a standard torus.  Glue $\closure{\bar e}\times \closure{\bar c}$ to $Y'$, if necessary, in the obvious way.  Note that this gluing adds no new $0$--cubes to $Y'$.  Indeed, every $0$--cube of $\closure {\bar e} \times \closure {\bar c}$ is identified with an existing $0$--cube of $Y'$ lying in $\overline {H_{i-1} \cap K} \times \{ m_{i-1}\}$. Adding $\closure{\bar e}\times \closure{\bar c}$ also preserves the existence of a local injection from our cube complex to $S_\Gamma$. Either this new complex admits a local isometry to $S_\Gamma$, or there is a missing cube of the form $\bar e \times \bar c$ where $\closure{\bar c}$ is a standard torus and $\bar e$ lies in $Y'$. We add cubes of this type until we have no missing corners. That the process terminates in a local isometry with compact domain $Y$ is a consequence of the following facts: at each stage, every missing cube has the form $\bar e \times \bar c$ where $\bar e$ lies in $Y'$ and $\closure{\bar c}$ is a standard torus, so the number of 0--cubes remains unchanged; each gluing preserves the existence of a local injection to $S_\Gamma$; each gluing increases the number of positive dimensional cubes containing some $0$--cube; cubes that we add are images of cubes in $K$, which is compact.

Note that there exists a combinatorial path $\gamma$ in $K'$ joining $\tilde x$ to $g \tilde x$.  It follows from the existence of $\gamma$ that the convex hull of $K'$ is precisely equal to $K$. Hence, there exists a based cubical map $(K, \tilde x) \rightarrow (Y, x) \rightarrow (S_\Gamma, x)$, so that the composition is the restriction of the covering map $(\widetilde S_\Gamma, \tilde x) \rightarrow (S_\Gamma, x)$. Therefore, any closed path in $S_\Gamma$ representing $g$ lifts to a non-closed path at $x$ in $Y$. It is easily verified that the number of $0$--cubes in $Y$ is bounded by $|Z^{(0)}| (m_1 + \cdots + m_k)$ where each $m_i$ is the length of $L_i$, and hence $|Y^{(0)}| \leq |Z^{(0)}| (|g| +1)$. Thus, $Y$ is the desired cube complex.
\end{proof}
\begin{rem}\label{rem:free_group_case}
When $\dimension S_\Gamma=1$, arguing as above shows that $Y$ can be chosen so that $|Y^{(0)}|\leq|Z^{(0)}| + |g|$.  Hence, if $F$ is freely generated by $\mathcal S$, with $|\mathcal S|=r$, then $\sepgrowth^{\mathcal Q_K}_{F,\mathcal S}(Q,n)\leq (2r)^{K}+n.$
\end{rem}

\bibliographystyle{acm}
\bibliography{res_state}
\end{document}